\documentclass[11pt]{amsart}
\usepackage{amscd}
\usepackage{easybmat}
\usepackage{mathrsfs}
\usepackage{amsfonts}
\usepackage{color}
\usepackage{pifont}
\usepackage{upgreek}
\usepackage{bm}
\usepackage{shorttoc}

\usepackage{amsmath,amstext,amssymb,amscd}
\usepackage[colorlinks,linkcolor=blue,citecolor=blue, pdfstartview=FitH]{hyperref}
\usepackage[mathscr]{eucal}
\usepackage{mathrsfs}
\usepackage{epsf}
\textwidth 15.8cm
\topmargin 0.5cm
\oddsidemargin 0.1cm
\evensidemargin 0.1cm
\parskip 0.0cm

\numberwithin{equation}{section}

\def\p{\partial}
\def\o{\overline}
\def\b{\bar}
\def\mb{\mathbb}
\def\mc{\mathcal}
\def\n{\nabla}

\def\wt{\widetilde}

\def\l{\lrcorner}

\newtheorem{thm}{Theorem}[section]
\newtheorem{lemma}[thm]{Lemma}
\newtheorem{prop}[thm]{Proposition}
\newtheorem{cor}[thm]{Corollary}
\theoremstyle{definition}
\newtheorem{rem}[thm]{Remark}
\newtheorem{con}[thm]{Conjecture}

\theoremstyle{definition}
\newtheorem{defn}[thm]{Definition}
\newcommand{\comment}[1]{}

\usepackage{fancyhdr}
\pagestyle{fancy}
\fancyhead{}
\fancyhead[CO]{\scriptsize{A LOGARITHMIC $\b{\p}$-EQUATION ON A COMPACT K\"AHLER MANIFOLD}}
\fancyhead[CE]{\scriptsize{XUEYUAN WAN}}
\rhead[]{\footnotesize{\thepage}}
\lhead[\footnotesize{\thepage}]{}
\cfoot{}

\begin{document}

\title{A logarithmic $\b{\p}$-equation on a compact K\"ahler manifold associated with a smooth divisor}

\author{Xueyuan Wan}
\thanks{Xueyuan Wan is partially supported by the National Natural Science Foundation of China (grant no. 12101093) and the Scientific Research Foundation of the Chongqing University of Technology. }
\address{Xueyuan Wan: Mathematical Science Research Center, Chongqing University of Technology, Chongqing 400054, China}
\email{xwan@cqut.edu.cn}

\begin{abstract}
 
 In this paper, we solve a logarithmic $\b{\p}$-equation on a compact K\"ahler manifold associated with a smooth divisor by using the cyclic covering trick. As applications, we discuss the closedness of logarithmic forms, injectivity theorems and obtain a kind of degeneration of spectral sequence at $E_1$. We also prove that the pair $(X,D)$ has unobstructed deformations for any smooth divisor $D\in|-2K_X|$. 
 \end{abstract}
\maketitle
%\tableofcontents

%%%%%%%%%%%%%%%%%%%%%%%%%%%%%%%%%%%%%%%%%%%
\section*{Introduction}

It is well-known that Deligne's degeneration of logarithmic Hodge to de Rham spectral sequences at $E_1$-level \cite{Del71} is a fundamental result and has a significant impact in algebraic geometry for vanishing and injectivity theorems \cite{Viehweg, Fujino}. P. Deligne and L. Illusie \cite{Del87} also proved this degeneration by using a purely algebraic method based on positive characteristics. In terms of K\"ahler geometry, K. Liu, S. Rao and the author \cite{Wan} developed a method so that the degeneration of the spectral sequence can be reduced to solve a logarithmic $\b{\p}$-equation. By using the harmonic integral theory \cite{de, Kodaira}, we can solve the logarithmic $\b{\p}$-equation and thus give a geometric and simpler proof for Deligne's degeneration \cite[Theorem 0.4]{Wan}. In this paper, we will continue to study another kind of logarithmic $\b{\p}$-equations associated with a smooth divisor by combining with the cyclic covering trick. 

Let $X$ be a compact K\"ahler manifold of dimension $n$ and $D=\sum_{i=1}^rD_i$ be a smooth divisor in $X$, i.e., $D_i\cap D_j=\emptyset$ for $i\neq j$, each $D_i$ is smooth. Let $L$ be a holomorphic line bundle over $X$ with $L^N=\mc{O}_X(D)$ for some integer $N\geq 1$, and let $\Omega^p_X(\log D)$ denote the sheaf of differential forms with logarithmic poles along $D$. From Proposition \ref{prop1}, one can endow an integrable logarithmic connection $d=\p+\b{\p}$ along $D$ on $L^{-1}$ (see Definition \ref{defn1}), namely a $\mb{C}$-linear map
$$d: L^{-1}\to \Omega^1_X(\log D)\otimes L^{-1}$$
satisfying Leibniz's rule and $d^2=0$. Let $A^{0,q}(X, \Omega^p_X(\log D)\otimes L^{-1})$ denote the space of smooth $(0,q)$-forms with values in $ \Omega^p_X(\log D)\otimes L^{-1}$. For any $\alpha\in A^{0,q}(X, \Omega^p_X(\log D)\otimes L^{-1})$ with $\b{\p}\p\alpha=0$, we will consider the following
logarithmic $\b{\p}$-equation:
\begin{align}\label{0.1}
\b{\p}x=\p\alpha	
\end{align}
such that $x\in A^{0,q-1}(X,\Omega^{p+1}_X(\log D)\otimes L^{-1})$. In order to solve (\ref{0.1}), we will use the cyclic covering trick (see Subsection \ref{sub1.3}). More precisely, let $s$ be a canonical section of $\mc{O}_X(D)$ and taking a cyclic cover obtained by taking the $N$-th root out of $s$, we will get a compact K\"ahler manifold $Y$ and a finite morphism $\pi: Y\to X$. By pulling-back the logarithmic $\b{\p}$-equation (\ref{0.1}) to $Y$, one gets a solvable logarithmic $\b{\p}$-equation. This will lead to the vanishing of the harmonic projection of the logarthmic form (see (\ref{harmonic})). By these and the bundle-valued Hodge decomposition theorem, we have 
\begin{thm}[=Theorem \ref{thm1}]\label{thm0}
Let $X$ be a compact K\"ahler manifold and $D=\sum_{i=1}^rD_i$ be a smooth divisor in $X$, let $L$ be a holomorphic line bundle over $X$ with $L^N=\mc{O}_X(D)$.  
For any $\alpha\in A^{0,q}(X,\Omega^p_X(\log D)\otimes L^{-1})$ with $\b{\p}\p\alpha=0$, the following equation 
\begin{align*}
\b{\p}x=\p\alpha
\end{align*}
has a solution $x\in A^{0,q-1}(X,\Omega^{p+1}_X(\log D)\otimes L^{-1})$.	
\end{thm}

The $d$-closedness of logarithmic forms has been studied in \cite{Del71, Nog, Wan}. As an 
 immediate application of Theorem \ref{thm0}, we obtain the following result of the closedness of logarithmic forms. 
\begin{cor}[=Corollary \ref{Cor1}]\label{Cor0}
	If $\alpha\in A^{0,0}(X,\Omega^p_X(\log D)\otimes L^{-1})$ with $\b{\p}\p\alpha=0$, then $\p\alpha=0$.
\end{cor}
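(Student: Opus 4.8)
The plan is to read off the corollary from Theorem~\ref{thm0} by taking $q=0$. Given $\alpha\in A^{0,0}(X,\Omega^p_X(\log D)\otimes L^{-1})$ with $\b{\p}\p\alpha=0$, the compatibility condition needed to invoke Theorem~\ref{thm0} holds, so the logarithmic $\b{\p}$-equation $\b{\p}x=\p\alpha$ admits a solution $x\in A^{0,-1}(X,\Omega^{p+1}_X(\log D)\otimes L^{-1})$.

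Next I would observe that the target space for the solution has collapsed: there are no smooth $(0,-1)$-forms, so $A^{0,-1}(X,\Omega^{p+1}_X(\log D)\otimes L^{-1})=0$, which forces $x=0$. Substituting back yields $\p\alpha=\b{\p}x=\b{\p}0=0$, which is exactly the assertion.

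The ``main obstacle'' here is thus no obstacle at all: the whole content sits in Theorem~\ref{thm0}, and the corollary is just its degree-zero specialization. If one prefers to see it directly through the proof of that theorem, the mechanism is the following: pulling $\p\alpha$ back along the cyclic cover $\pi\colon Y\to X$ makes the equation $\b{\p}\,\tilde{x}=\pi^{*}(\p\alpha)$ solvable, which forces the $\b{\p}$-harmonic projection of $\pi^{*}(\p\alpha)$ to vanish; but $\pi^{*}(\p\alpha)$ is a $\b{\p}$-closed form of type $(0,0)$, hence a holomorphic and therefore harmonic section, so it equals its own harmonic projection and must be zero. Since $\pi$ is finite and surjective, $\p\alpha=0$ on $X$ as well.
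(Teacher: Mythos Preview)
Your proof is correct and matches the paper's own argument essentially verbatim: apply Theorem~\ref{thm0} with $q=0$, observe that $A^{0,-1}(X,\Omega^{p+1}_X(\log D)\otimes L^{-1})=0$, hence $x=0$ and $\p\alpha=\b\p x=0$. The extra paragraph unpacking the cyclic-cover mechanism is accurate but superfluous for the corollary; the paper does not include it.
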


In \cite[Corollary 3.6]{Wan}, K. Liu, S. Rao and the author reproved an injectivity theorem of F. Ambro \cite[Theorem 2.1]{Ambro}, the key step is to prove the following mapping 
\begin{align*}
	\begin{CD}
		 H^q(X,\Omega^n_X(\log D))\to  H^{q+n}(X,\Omega^{\bullet}_X(\log D)),\quad [\alpha]\mapsto [\alpha]_d,
	\end{CD}
	\end{align*} is injective by solving a logarithmic $\b{\p}$-equation. As a comparison, 
the second application of Theorem \ref{thm0} is the following injectivity theorem. 
\begin{prop}[=Proposition \ref{Cor2}]\label{prop0}
	The following mapping is injective:
	\begin{align*}
	\begin{CD}
		&\iota: H^q(X,\Omega^n_X(\log D)\otimes L^{-1})\to H^{q+n}(X,\Omega^{\bullet}_X(\log D)\otimes L^{-1}),
	\end{CD}
	\end{align*}
where $\iota([\alpha])=[\alpha]_d$ and $H^{q+n}(X,\Omega^{\bullet}_X(\log D)\otimes L^{-1})$ denotes  the cohomology of the complex of sections $\left(\Gamma(X,\Omega^{\bullet}_X(\log D)\otimes L^{-1}),d\right)$ of $\Omega^{\bullet}_X(\log D)\otimes L^{-1}$.
\end{prop}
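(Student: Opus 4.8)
The plan is to follow the strategy of \cite[Corollary 3.6]{Wan}, where the analogous untwisted statement is deduced from a logarithmic $\b{\p}$-equation; here the equation one needs is precisely Theorem \ref{thm0}, so what is left is a short unwinding of definitions. Throughout, $\p$ denotes the $(1,0)$-part of the integrable logarithmic connection $d=\p+\b{\p}$ on $L^{-1}$ from Proposition \ref{prop1}, extended to forms valued in $\Omega^{\bullet}_X(\log D)\otimes L^{-1}$, and $H^{q+n}(X,\Omega^{\bullet}_X(\log D)\otimes L^{-1})$ is computed, as in \cite{Wan}, by the simple complex associated with the double complex whose $(i,j)$-term is $A^{0,j}(X,\Omega^i_X(\log D)\otimes L^{-1})$ equipped with $\b{\p}$ and $\p$, the total differential being $d$. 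For a $\b{\p}$-closed $\alpha\in A^{0,q}(X,\Omega^n_X(\log D)\otimes L^{-1})$ one has $\p\alpha=0$ automatically, since $\Omega^{n+1}_X(\log D)=0$ on the $n$-dimensional manifold $X$, so $d\alpha=0$ and $\iota([\alpha])=[\alpha]_d$ is the class of $\alpha$ regarded in total degree $n+q$.

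First I would pick a Dolbeault representative: let $\eta\in\ker\iota$ and choose $\alpha\in A^{0,q}(X,\Omega^n_X(\log D)\otimes L^{-1})$ with $\b{\p}\alpha=0$ representing $\eta$. Since $\iota(\eta)=[\alpha]_d=0$, there is an element $\beta$ of total degree $n+q-1$ with $d\beta=\alpha$. Decomposing $\beta=\sum_p\beta^{(p)}$ with $\beta^{(p)}\in A^{0,\,q+n-1-p}(X,\Omega^p_X(\log D)\otimes L^{-1})$ and comparing the components of top form-degree $n$ on both sides of $d\beta=\alpha$ gives
\begin{align*}
\b{\p}\beta^{(n)}+\p\beta^{(n-1)}=\alpha ,
\end{align*}
with $\beta^{(n)}\in A^{0,q-1}(X,\Omega^n_X(\log D)\otimes L^{-1})$ and $\beta^{(n-1)}\in A^{0,q}(X,\Omega^{n-1}_X(\log D)\otimes L^{-1})$ (the potential form-degree-$(n+1)$ term vanishes). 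Now I would feed $\beta^{(n-1)}$ into Theorem \ref{thm0}, taken with $p=n-1$: it produces $x\in A^{0,q-1}(X,\Omega^n_X(\log D)\otimes L^{-1})$ solving $\b{\p}x=\p\beta^{(n-1)}$. Substituting into the displayed identity,
\begin{align*}
\alpha=\b{\p}\beta^{(n)}+\b{\p}x=\b{\p}\bigl(\beta^{(n)}+x\bigr),
\end{align*}
so $\alpha$ is $\b{\p}$-exact in the Dolbeault complex of $\Omega^n_X(\log D)\otimes L^{-1}$, i.e.\ $\eta=[\alpha]=0$ in $H^q(X,\Omega^n_X(\log D)\otimes L^{-1})$; hence $\iota$ is injective.

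Because the analytic substance already sits inside Theorem \ref{thm0}, I do not expect a genuine obstacle above; the only care needed is bookkeeping — fixing sign conventions for the total differential so that the form-degree-$n$ component of $d\beta=\alpha$ is exactly as written (any sign is harmless, since one may replace $\beta^{(n-1)}$ or $x$ by its negative), and checking that the cohomology named in the statement is the one computed by the simple complex above, so that $\iota(\eta)=0$ really does translate into $\alpha=d\beta$. One could also bypass Theorem \ref{thm0} and argue on the cyclic cover $\pi\colon Y\to X$ directly: the untwisted injectivity \cite[Corollary 3.6]{Wan} for $Y$ with its reduced branch divisor is equivariant for the Galois action and, under the eigensheaf decomposition in which $\pi_*$ of the logarithmic de Rham complex of $Y$ becomes $\bigoplus_{i=0}^{N-1}\bigl(\Omega^{\bullet}_X(\log D)\otimes L^{-i},d\bigr)$, splits as the direct sum of the maps $\iota$ attached to the $L^{-i}$; since a direct summand of an injective map is injective, the $i=1$ piece is the desired statement. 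The first route is shorter and is the application of Theorem \ref{thm0} announced in the introduction.
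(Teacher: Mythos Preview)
Your proof is correct and follows essentially the same approach as the paper: represent a class in the kernel by a $\b{\p}$-closed $\alpha$, write $\alpha=d\beta$, extract the top form-degree component $\alpha=\p\beta_{n-1,q}+\b{\p}\beta_{n,q-1}$, and invoke Theorem \ref{thm0} on $\beta_{n-1,q}$ to turn $\p\beta_{n-1,q}$ into a $\b{\p}$-exact term. The paper's proof is exactly this, with only notational differences; your additional remarks on sign conventions and the alternative route via the Galois eigenspace decomposition on the cyclic cover are extra but sound.
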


By logarithmic analogue of the general description on the terms in the Fr\"olicher spectral sequence as in \cite[Theorems 1 and 3]{cfgu}, $E_r^{p,q}\cong Z_r^{p,q}/B_r^{p,q}$ and $d_r: E_r^{p,q}\longrightarrow E_r^{p+r,q-r+1}$, where $Z_r^{p,q}$ and $B_r^{p,q}$  can be described as in (\ref{zr}) and (\ref{br}).  one would have $d_i=0, \forall i\geq 1$ if (\ref{0.1}) is solvable. Therefore, we get the following $E_1$-degeneration of spectral sequence. 
\begin{thm}[=Theorem \ref{thm2}]\label{thm0.1}
The spectral sequence 
\begin{align}
E^{p,q}_1=H^{q}(X,\Omega^p_X(\log D)\otimes L^{-1})\Longrightarrow \mb{H}^{p+q}(X,\Omega^{\bullet}_X(\log D)\otimes L^{-1})	
\end{align}
associated to the logarithmic de Rham complex 
$$(\Omega^{\bullet}_X(\log D)\otimes L^{-1},d)$$
degenerates in $E_1$. Here $\mb{H}^{p+q}(X,\Omega^{\bullet}_X(\log D)\otimes L^{-1})$ denotes the hypercohomology.
\end{thm}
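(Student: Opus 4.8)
The plan is to deduce the $E_1$-degeneration from the solvability of the logarithmic $\b{\p}$-equation in Theorem \ref{thm0} via the standard hypercohomology bookkeeping, exactly as in the author's earlier work \cite{Wan} and in \cite{cfgu}, but now for the twisted logarithmic de Rham complex. First I would set up the Dolbeault-type resolution: represent $\mb{H}^{p+q}(X,\Omega^{\bullet}_X(\log D)\otimes L^{-1})$ by the total complex of the double complex $A^{s,t}:=A^{0,t}(X,\Omega^s_X(\log D)\otimes L^{-1})$ with horizontal differential $\p$ (the logarithmic connection's $(1,0)$-part from Proposition \ref{prop1}) and vertical differential $\b{\p}$. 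The associated spectral sequence with $E_1^{p,q}=H^q(X,\Omega^p_X(\log D)\otimes L^{-1})$ and differentials $d_r:E_r^{p,q}\to E_r^{p+r,q-r+1}$ is the one in the statement; this is where I invoke the logarithmic analogue of \cite[Theorems 1 and 3]{cfgu} giving $E_r^{p,q}\cong Z_r^{p,q}/B_r^{p,q}$ with the explicit $Z_r^{p,q},B_r^{p,q}$ referenced as (\ref{zr}), (\ref{br}) in the paper.

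Next I would carry out the inductive vanishing of the differentials. Fix $r\geq 1$ and a class in $E_r^{p,q}$ represented by $\alpha\in A^{0,q}(X,\Omega^p_X(\log D)\otimes L^{-1})$ lying in $Z_r^{p,q}$, which in particular forces $\b{\p}\alpha=0$ and (by the $Z_r$ description, after successive corrections) $\b{\p}\p\alpha=0$, so that $\p\alpha$ is $\b{\p}$-closed. By Theorem \ref{thm0} there exists $x\in A^{0,q-1}(X,\Omega^{p+1}_X(\log D)\otimes L^{-1})$ with $\b{\p}x=\p\alpha$. The point is that $d_r[\alpha]$ is, by the $cfgu$-type formula, represented (up to the identifications) by the $\p$-image of an element built from $\alpha$ and lower corrections whose $\b{\p}$-primitive is supplied precisely by such an $x$; replacing $\alpha$ by $\alpha - x$ (which does not change the class it represents at the $E_r$-page, since $x$ contributes a coboundary-type term) shows $\p\alpha$ is $\b{\p}$-exact in a way that kills $d_r[\alpha]$. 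Running this for all $r\geq 1$ gives $d_r=0$ for all $r\geq1$, hence degeneration at $E_1$. An equivalent, cleaner packaging: solvability of \eqref{0.1} for all $(p,q)$ says exactly that in the double complex $(A^{\bullet,\bullet},\p,\b{\p})$ every $\b{\p}$-closed, $\b{\p}\p$-closed element has its $\p$ absorbed into $\b{\p}$, which is the well-known "principle of two differentials" criterion forcing all higher differentials of the first spectral sequence to vanish; I would state this as a lemma and apply it.

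Finally I would reconcile the abstract spectral-sequence argument with the explicit $Z_r/B_r$ description to make sure no twisting subtlety is lost: the only structural inputs are that $\p$ and $\b{\p}$ anticommute (equivalently $\b{\p}\p+\p\b{\p}=0$, which follows from $d^2=0$ for the logarithmic connection, Proposition \ref{prop1}) and that $H^q(X,\Omega^p_X(\log D)\otimes L^{-1})$ is computed by $\b{\p}$-cohomology of $A^{0,q}(X,\Omega^p_X(\log D)\otimes L^{-1})$, which is the logarithmic Dolbeault isomorphism. Neither uses positivity of $L$, so the argument goes through verbatim for $L^{-1}$.

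I expect the main obstacle to be purely expository rather than mathematical: carefully matching the cocycle/coboundary spaces $Z_r^{p,q}$, $B_r^{p,q}$ from the logarithmic version of \cite{cfgu} with the output of Theorem \ref{thm0}, i.e.\ checking that the element whose $\b{\p}$-primitive Theorem \ref{thm0} produces is exactly the one appearing in the formula for $d_r$, and verifying that the successive corrections needed to stay inside $Z_r^{p,q}$ at each stage are themselves furnished by iterating \eqref{0.1}. Once that dictionary is in place, the vanishing $d_r=0$ for all $r\geq1$ is immediate and the theorem follows.
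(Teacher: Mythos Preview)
Your proposal is correct and follows essentially the same route as the paper: set up the logarithmic Dolbeault double complex, invoke the \cite{cfgu}-type description $E_r^{p,q}\cong Z_r^{p,q}/B_r^{p,q}$ with $d_r[\alpha_{p,q}]=[\p\alpha_{p+r-1,q-r+1}]$, and then iterate Theorem~\ref{thm0} along the zig-zag to force $d_r=0$ for all $r\geq 1$. The one mis-step is the phrase ``replacing $\alpha$ by $\alpha-x$'' (they live in different bidegrees), but your own ``cleaner packaging'' paragraph already gives the correct formulation, and the paper's proof is in fact terser than yours---it simply writes down $Z_r^{p,q}$, $B_r^{p,q}$, the formula for $d_r$, and says a direct application of Theorem~\ref{thm1} gives $d_i=0$.
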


It is well-known that a Calabi-Yau manifold has unobstructed deformations  \cite{T87, To89}. More precisely, if $X$ is a Calabi-Yau manifold, then for any $[\varphi_1]\in H^{1}(X,T_X)$, one can construct a holomorphic family $\varphi(t)\in A^{0,1}(X, T_X)$ on $t$, and it satisfies the  integrable equation $\b{\p}\varphi(t)=\frac{1}{2}[\varphi(t),\varphi(t)]$, $(\p\varphi(t)/\p t)|_{t=0}=\varphi_1$. In the case of logarithmic deformations, the set of infinitesimal logarithmic deformations is the space $H^1(X, T_X(-\log D))$. The pair $(X,D)$ has unobstructed  deformations if one can construct a holomorphic family
$$\varphi:=\varphi(t)\in A^{0,1}(X, T_X(-\log D))$$ satisfying the following integrability and initial conditions:
\begin{align*}
\b{\p}\varphi=\frac{1}{2}[\varphi,\varphi],\quad \frac{\p \varphi}{\p t}(0)=\varphi_1.	
\end{align*}
By using an iterative method originally from \cite{T87, To89, LSY} and developed in \cite{Liu, RZ, RZ2, RZ15, RwZ, RwZ1, Wan, Liu1}, we prove
\begin{thm}[=Theorem \ref{thm4}]\label{thm0.2}
Let $X$ be a compact K\"ahler manifold and $D$ a smooth divisor such that 
$D\in |-2K_X|$. Then, the pair $(X,D)$ has unobstructed deformations. 
\end{thm}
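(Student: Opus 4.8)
The plan is to run the Tian–Todorov iteration in the logarithmic setting, using the hypothesis $D\in|-2K_X|$ to set up the cyclic covering and then feeding Theorem \ref{thm0} into the power-series construction to kill the obstructions. First I would observe that $D\in|-2K_X|$ means $\mc{O}_X(D)=K_X^{-2}$, so taking $L=K_X^{-1}$ we have $L^2=\mc{O}_X(D)$, i.e. we are exactly in the situation of Theorem \ref{thm0} with $N=2$. The key algebraic input is the isomorphism $T_X(-\log D)\cong \Omega^{n-1}_X(\log D)\otimes (K_X)^{-1}\otimes (\text{something})$; more precisely, contracting with the logarithmic canonical form gives a natural isomorphism $T_X(-\log D)\cong \Omega^{n-1}_X(\log D)\otimes K_X(D)^{-1}$, and since $K_X(D)=K_X\otimes\mc{O}_X(D)=K_X\otimes K_X^{-2}=K_X^{-1}=L$, we obtain
\begin{align*}
T_X(-\log D)\cong \Omega^{n-1}_X(\log D)\otimes L^{-1}.
\end{align*}
Under this isomorphism, a class $[\varphi_1]\in H^1(X,T_X(-\log D))$ corresponds to a class in $H^1(X,\Omega^{n-1}_X(\log D)\otimes L^{-1})$, and the whole iteration can be transported to the bundle $\Omega^{\bullet}_X(\log D)\otimes L^{-1}$ where Theorem \ref{thm0} applies.

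Next I would carry out the iteration. Fix a Kähler metric; let $\varphi_1\in A^{0,1}(X,T_X(-\log D))$ be the harmonic representative of the given class, and construct $\varphi(t)=\sum_{k\geq 1}\varphi_k t^k$ (or with a multi-index $t$) by solving, order by order, the equation $\b\p\varphi_k = \tfrac12\sum_{i+j=k}[\varphi_i,\varphi_j]$. The standard point (Tian–Todorov lemma, in its logarithmic form — this is where the identification with $\Omega^{n-1}_X(\log D)\otimes L^{-1}$ is used, together with the fact that the logarithmic connection $d=\p+\b\p$ on $L^{-1}$ from Proposition \ref{prop1} is flat) is that the right-hand side $\Psi_k:=\tfrac12\sum_{i+j=k}[\varphi_i,\varphi_j]$, viewed under the isomorphism as an element of $A^{0,2}(X,\Omega^{n-1}_X(\log D)\otimes L^{-1})$, is of the form $\p(\text{something})$ up to $\b\p$-exact terms: one writes $[\varphi_i,\varphi_j] \doteq \pm\,\p(\varphi_i\wedge\varphi_j)$ modulo $\b\p$-exact terms once the induction hypothesis $\b\p\varphi_i=\Psi_i=\p(\cdots)$ is in force. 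Then Theorem \ref{thm0}, applied with $(p,q)=(n-1,2)$, produces $\varphi_{k}$ with $\b\p\varphi_k=\Psi_k$; a standard argument (replacing $\varphi_k$ by its $\b\p^*$-potential, i.e. choosing the solution orthogonal to harmonic and holomorphic parts, via the bundle-valued Hodge decomposition) gives $\p$-closedness of $\varphi_k$ so that the induction propagates. Convergence of the series for small $t$ in a suitable Hölder norm is then obtained by the usual a priori estimate $\|\varphi_k\|_{C^{k+1,\alpha}}\lesssim$ (coefficient of a majorant series), exactly as in \cite{T87, To89, Liu, Wan}; I would cite those sources for the elliptic estimates rather than reproving them.

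The main obstacle I anticipate is verifying the logarithmic Tian–Todorov identity carefully: one must check that the Lie bracket $[\cdot,\cdot]$ on $A^{0,\bullet}(X,T_X(-\log D))$ is intertwined, under the isomorphism $T_X(-\log D)\cong\Omega^{n-1}_X(\log D)\otimes L^{-1}$, with the operation $\varphi,\psi\mapsto \p(\varphi\wedge\psi)$ modulo $\b\p$-exact terms, where $\p$ here is the $(1,0)$-part of the flat logarithmic connection on $\Omega^{\bullet}_X(\log D)\otimes L^{-1}$. This requires that contraction with the logarithmic volume form be compatible with the logarithmic de Rham differential — which is a local computation near $D$, using that in local coordinates where $D=\{z_1=0\}$ the form $dz_1/z_1$ is $d$-closed and the connection on $L^{-1}$ is the one constructed in Proposition \ref{prop1}. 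Once this identity is in hand, the rest is the by-now-standard machinery; the only genuinely new ingredient beyond \cite{Wan} is the presence of the twist by $L^{-1}$, which is precisely what Theorem \ref{thm0} was built to handle, and the hypothesis $D\in|-2K_X|$ is what makes the twisting bundle come out to be exactly $L^{-1}$ with $L^2=\mc{O}_X(D)$.
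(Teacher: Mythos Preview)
Your proposal is correct and follows essentially the same route as the paper: both take $L=K_X^{-1}$ so that $L^2=\mc{O}_X(D)$, use the contraction isomorphism $T_X(-\log D)\cong \Omega^{n-1}_X(\log D)\otimes L^{-1}$ (the paper's Lemma \ref{log lemma2} with $\Omega^n_X(\log D)\otimes K_X$ trivial), and then feed Theorem \ref{thm0} into the Tian--Todorov iteration exactly as in \cite{Wan}. The only cosmetic difference is that the paper packages the iteration via the coupled system \eqref{log 0.2} for $(\varphi(t),\Omega(t))$ and Proposition \ref{prop3}, whereas you describe the recursion for the $\varphi_k$ directly; the underlying argument and the key input are identical.
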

Note that for $X$ is projective, the above theorem was proved in \cite[Proposition 6.4, Remark 6.5]{Iacono} and \cite[Section 4.3.3]{KKP08} by a purely algebraic method. 

From the results \cite[Theorem 3.2 (b)]{Viehweg} and \cite[Section 4.3.3 (iii)]{KKP08}, there are more general $E_1$-degenerations of spectral sequences and the unobstructed deformations for smooth projective varieties. Both these results would be proved for compact K\"ahler manifolds if one can solve a more general logarithmic $\b{\p}$-equation. Based on these, it is natural to propose the following conjecture.
\begin{con}\label{con}
	Let $X$ be a compact K\"ahler manifold and $D=\sum_{i=1}^rD_i$ a simple normal crossing divisor in $X$. If $L$ is a holomorphic bundle over $X$ with $L^N=\mc{O}_X(\sum_{i=1}^r a_i D_i)$, $0\leq a_i\leq N$ and $a_i\in \mb{Z}$, then the following logarithmic $\b{\p}$-equation:
\begin{align}\label{dbar4}\b{\p}x=\p\alpha\end{align}
has a solution $x\in A^{0,q-1}(X,\Omega^{p+1}_X(\log D)\otimes L^{-1})$ for any $\alpha\in A^{0,q}(X,\Omega^p_X(\log D)\otimes L^{-1})$ with $\b{\p}\p\alpha=0$.
\end{con}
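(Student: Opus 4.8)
The plan is to extend the cyclic covering argument behind Theorem~\ref{thm0} to the simple normal crossing setting, the new feature being that the cyclic cover is no longer smooth and must first be resolved. Let $s$ be a canonical section of $\mc{O}_X(\sum_i a_iD_i)=L^N$, and let $\pi:Y\to X$ be the cyclic cover obtained by extracting the $N$-th root of $s$, as in Subsection~\ref{sub1.3}. Along the preimages of the pairwise intersections $D_i\cap D_j$ the space $Y$ acquires quotient (toroidal) singularities, so one passes to a toroidal resolution $\tau:\wt Y\to Y$ --- a finite composition of blow-ups along smooth centers --- and sets $\psi:=\pi\circ\tau:\wt Y\to X$. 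Since blow-ups of compact K\"ahler manifolds along smooth submanifolds remain K\"ahler and $Y$ carries the pulled-back K\"ahler class off its singular locus, $\wt Y$ is a compact K\"ahler manifold; moreover $\wt D:=(\psi^{-1}D)_{\mathrm{red}}$ together with the $\tau$-exceptional locus is a simple normal crossing divisor on $\wt Y$, and $G=\mb{Z}/N$ acts on $\wt Y$ over $X$.

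First I would set up the eigensheaf decomposition. Following the computation for cyclic covers branched along a normal crossing divisor \cite{Viehweg}, for each $0\le k\le N-1$ the sheaf $\Omega^p_X(\log D)\otimes L^{-k}$ should appear --- once the integral round-downs $\lfloor ka_i/N\rfloor$ are taken into account, which the hypothesis $0\le a_i\le N$ is exactly calibrated to control --- as the $\chi^k$-eigensheaf of $\psi_*\Omega^p_{\wt Y}(\log\wt D)$, compatibly with both $\p$ and $\b{\p}$. Granting this, $\psi^*$ identifies $A^{0,q}(X,\Omega^p_X(\log D)\otimes L^{-k})$ with the $\chi^k$-eigenspace of $A^{0,q}(\wt Y,\Omega^p_{\wt Y}(\log\wt D))$ (with $G$ acting by pullback), and the eigenspace projection --- averaging over $G$ against the character $\chi^{-k}$ --- commutes with $\p$ and $\b{\p}$.

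Then, given $\alpha\in A^{0,q}(X,\Omega^p_X(\log D)\otimes L^{-1})$ with $\b{\p}\p\alpha=0$, I would pull it back to an untwisted form $\psi^*\alpha\in A^{0,q}(\wt Y,\Omega^p_{\wt Y}(\log\wt D))$ lying in the $\chi^1$-eigenspace, note that $\b{\p}\p(\psi^*\alpha)=0$, and apply the untwisted logarithmic $\b{\p}$-equation solved in \cite{Wan} on the compact K\"ahler manifold $\wt Y$ with its simple normal crossing divisor $\wt D$ to produce $\wt x\in A^{0,q-1}(\wt Y,\Omega^{p+1}_{\wt Y}(\log\wt D))$ with $\b{\p}\wt x=\p\psi^*\alpha$. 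Projecting $\wt x$ onto its $\chi^1$-component $\wt x_1$, one has $\b{\p}\wt x_1=\p\psi^*\alpha$ (the projection commutes with $\p,\b{\p}$ and $\p\psi^*\alpha$ is already of type $\chi^1$), and by the eigensheaf identification $\wt x_1=\psi^*x$ for the desired solution $x\in A^{0,q-1}(X,\Omega^{p+1}_X(\log D)\otimes L^{-1})$ of $\b{\p}x=\p\alpha$.

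The step I expect to be the main obstacle is the eigensheaf decomposition itself, made precise \emph{at the level of the logarithmic de Rham complex on the resolution}: one must show that the $\chi^1$-summand of $\psi_*\Omega^\bullet_{\wt Y}(\log\wt D)$ is $\p$- and $\b{\p}$-compatibly isomorphic to $\Omega^\bullet_X(\log D)\otimes L^{-1}$, and does not merely contain a copy of it, even though $\wt D$ picks up the $\tau$-exceptional divisors which are absent from $D$; this is precisely where the normal crossing hypothesis, the bound $0\le a_i\le N$, and a careful toroidal choice of resolution enter, and where a careless resolution would introduce spurious contributions. A secondary difficulty is the K\"ahlerness of $\wt Y$ in the non-projective case, since one cannot resolve an arbitrary singular compact K\"ahler space and remain K\"ahler: it is used that $Y$ has only toroidal singularities, which admit resolutions built from explicit blow-ups. (An alternative in the spirit of Propositions~\ref{prop0.1} and \ref{prop0.2} would be to work directly on $X-D$ with a Poincar\'e-type metric and an $L^2$ harmonic theory for the torsion unitary flat bundle $L^{-1}|_{X-D}$, but this would require an $L^2$ existence theorem for the corresponding $\b{\p}$-equation, which does not appear to be any easier.)
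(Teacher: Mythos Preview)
The statement you address is a \emph{conjecture} in the paper (Conjecture~\ref{con}, restated as Conjecture~\ref{con5.1} in Section~\ref{sec5}); the paper offers no proof of it. There is therefore nothing to compare your argument against; the relevant question is whether your outline closes the gap the author explicitly leaves open.

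It does not. The discussion in Section~\ref{sec5} explains that the projective case of the corresponding $E_1$-degeneration \cite[Theorem~3.2(b)]{Viehweg} is proved exactly along the lines you sketch---pass to the cyclic cover, desingularize, and reduce to the untwisted Hodge--de~Rham degeneration---but that the desingularization step rests on Kawamata's covering construction \cite[Lemma~3.19]{Viehweg}, which \emph{requires an ample invertible sheaf} and has no known substitute on a general compact K\"ahler manifold. Your assertion that $\wt Y$ is compact K\"ahler is thus the crux, and your justification (``blow-ups of compact K\"ahler manifolds along smooth submanifolds remain K\"ahler and $Y$ carries the pulled-back K\"ahler class off its singular locus'') does not establish it: $Y$ is not a manifold, so the blow-up statement does not apply as stated, and a K\"ahler class on the smooth locus of a singular space need not extend across a resolution. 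You later downgrade this to a ``secondary difficulty,'' but it is precisely the obstruction the paper isolates. By contrast, the eigensheaf decomposition you flag as the ``main obstacle'' is handled in the algebraic category by \cite[\S3]{Viehweg} and would transfer once a suitable K\"ahler $\wt Y$ is available. Your remark that toroidal singularities ``admit resolutions built from explicit blow-ups'' and are therefore K\"ahler would, if made precise and proved, essentially settle the conjecture; as written it is an assertion of exactly what is missing.
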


This article is organized as follows. In Section \ref{sec1}, we will recall some basic definitions and facts on logarithmic connection and cyclic covering, and prove that there is a canonical integrable logarithmic connection along $D$ on $L$.  In Section \ref{sec2}, by using the cyclic covering trick, we will solve a logarithmic $\b{\p}$-equation on a compact K\"ahler manifold associated to a smooth divisor, and then Theorem \ref{thm0} is proved in this section.  In Section \ref{sec3}, we will give some applications to Theorem \ref{thm0} and prove Corollary \ref{Cor0}, Proposition \ref{prop0}, Theorem \ref{thm0.1} and Theorem \ref{thm0.2}.  In Section \ref{sec5}, we make some further discussions and propose Conjecture \ref{con} on solving a more general logarithmic $\b{\p}$-equation.

\vspace{5mm}
\textbf{Acknowledgement}:
 The author would like to express his gratitude to Professor Kefeng Liu for suggesting
related problems and many insightful discussions on using the cyclic covering trick to study the geometry of logarithmic forms, and thank Professors  Sheng Rao and Xiaokui Yang for many helpful discussions.

%%%%%%%%%%%%%%%%%%%%%%%%%%%%%%%%%%%%%%%%%%%

\section{Preliminaries}\label{sec1}

In this section, we will recall some basic definitions and facts related to logarithmic connections and cyclic coverings. For more details, one may refer to \cite{Ancona, BHPV, EV06, Viehweg, Griffith, Kollar, Wan, Laz}.

\subsection{Logarithmic connection}\label{subsec1}

Let $X$ be a compact complex manifold of dimension $n$ and $D$ be a simple normal crossing divisor in $X$, i.e., $D=\sum_{i=1}^r D_i$, where  $D_i,1\leq i\leq r$ are distinct smooth hypersurfaces intersecting transversely in $X$.

Denote by $\tau: U:=X-D\to X$ the natural inclusion and
$$
\Omega^p_X(*D)=\lim_{\vec{\nu}}\Omega^p_X(\nu\cdot D)=\tau_*\Omega^p_U.
$$
Then $(\Omega^{\bullet}_X(*D),d)$ is a complex. The sheaf of logarithmic forms $$\Omega^p_X(\log D)$$ (introduced by Deligne in \cite{De}) is defined as the subsheaf of $\Omega^p_X(*D)$ with logarithmic poles along $D$, i.e., for any open subset $V\subset X$,
$$\Gamma(V,\Omega^p_X(\log D))=\{ \alpha\in \Gamma(V,\Omega^p_X(*D)): \alpha \,\,\text{and}\,\, d\alpha\,\, \text{have simple poles along}\,\, D\}.$$
 From (\cite[II, 3.1-3.7]{Del70} or \cite[Properties 2.2]{Viehweg}), the log complex $(\Omega^{\bullet}_X(\log D), d)$ is a subcomplex of $(\Omega^{\bullet}_X(*D), d)$ and $\Omega^p_X(\log D)$ is locally free,
 $$\Omega^p_X(\log D)=\wedge^p\Omega^1_X(\log D).$$

 For any $z\in X$,  we may choose local holomorphic coordinates $\{z^1,\cdots, z^n\}$ in a small neighborhood $U$ of $z=(0,\cdots, 0)$ such that
 $$D\cap U=\{z^1\cdots z^k=0\}$$
 is the union of coordinates hyperplanes. Such a pair
\begin{equation*}
(U,\{z^1,\cdots, z^n\})
\end{equation*}
is called a \textit{logarithmic coordinate system} \cite[Definition 1]{kawa}.
 Then $\Omega^p_X(\log D)$ is generated by the holomorphic forms and logarithmic differentials $dz^i/z^i$ ($i=1,\ldots, k$), i.e.,
 \begin{align*}
 \Omega^p_X(\log D)=\Omega^p_X\left\{\frac{dz^1}{z^1},\cdots,\frac{dz^k}{z^k}\right\}.	
 \end{align*}
Denote by $$A^{0,q}(X,\Omega^p_X(\log D))$$ the space of smooth $(0,q)$-forms on $X$ with values in $\Omega^p_X(\log D)$, and call an element of $A^{0,q}(X,\Omega^p_X(\log D))$  a \emph{logarithmic $(p,q)$-form}.

Now we recall the definition of logarithmic connections on a locally free coherent sheaf. 
 \begin{defn}[{\cite[Definition 2.4]{Viehweg}}]\label{defn1}
 	Let $\mc{E}$ be a locally free coherent sheaf on $X$ and let 
 	$$\n:\mc{E}\to \Omega^1_X(\log D)\otimes\mc{E}$$
 	be a $\mb{C}$-linear map satisfying
 	\begin{align}\label{1.11}\n(f\cdot e)=f\cdot\n e+df\otimes e.\end{align}
 	One defines 
 	$$\n_a:\Omega^a_X(\log D)\otimes \mc{E}\to \Omega^{a+1}_X(\log D)\otimes \mc{E}$$ 
 	by the rule 
 	$$\n_a(\omega\otimes e)=d\omega\otimes e+(-1)^a \omega\wedge \n e.$$
 	We assume that $\n_{a+1}\circ\n_a=0$ for all $a$. Such $\n$ will be called an {\it integrable logarithmic connection along $D$}, or just a connection. The complex 
 	$$(\Omega^{\bullet}_X(\log D)\otimes\mc{E}, \n_{\bullet})$$
 	is called the {\it logarithmic de Rham complex} of $(\mc{E}, \n)$. 
 \end{defn}

Let $L$ be a holomorphic line bundle over $X$ satisfying
\begin{align*}
L^N=\mc{O}_X\left(\sum_{i=1}^r a_i D_i\right)	
\end{align*}
for some $a_i\in\mb{Z}$, $1\leq i\leq r$. Then 
\begin{prop}\label{prop1}
There exists an integrable logarithmic connection along $D$ on $L$. 
\end{prop}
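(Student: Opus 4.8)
The plan is to build the connection explicitly from local transition data of $L$ and patch it using a partition of unity, or — more cleanly — to produce a global logarithmic $1$-form of "type $(1,0)$ with prescribed residues" and take the difference with the Chern connection. I would proceed as follows. Fix a system of logarithmic coordinate charts $(U_\alpha, \{z_\alpha^1,\dots,z_\alpha^n\})$ covering $X$, and on each $U_\alpha$ choose a local holomorphic frame $e_\alpha$ of $L$, with transition functions $g_{\alpha\beta}\in\mathcal{O}^*(U_\alpha\cap U_\beta)$. The hypothesis $L^N=\mathcal{O}_X(\sum a_i D_i)$ means $g_{\alpha\beta}^N = f_\alpha f_\beta^{-1}$ for meromorphic functions $f_\alpha$ on $U_\alpha$ whose divisor is $N^{-1}$-times\,(that is, $\mathrm{div}(f_\alpha)=\sum_i a_i (D_i\cap U_\alpha)\cdot$, matching the section defining $\mathcal{O}_X(\sum a_iD_i)$). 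Then
\begin{align*}
N\,\frac{dg_{\alpha\beta}}{g_{\alpha\beta}} = \frac{df_\alpha}{f_\alpha}-\frac{df_\beta}{f_\beta},
\end{align*}
so the local logarithmic $1$-forms $\theta_\alpha := \tfrac{1}{N}\tfrac{df_\alpha}{f_\alpha}\in\Gamma(U_\alpha,\Omega^1_X(\log D))$ satisfy $\theta_\alpha-\theta_\beta = \tfrac{dg_{\alpha\beta}}{g_{\alpha\beta}}$ on overlaps. This is exactly the cocycle condition needed for $\nabla(e_\alpha):=\theta_\alpha\otimes e_\alpha$ to glue to a global $\mathbb{C}$-linear map $\nabla: L\to\Omega^1_X(\log D)\otimes L$: if $s=s_\alpha e_\alpha = s_\beta e_\beta$ with $s_\alpha = g_{\alpha\beta}s_\beta$, one checks $ds_\alpha\otimes e_\alpha + s_\alpha\theta_\alpha\otimes e_\alpha = ds_\beta\otimes e_\beta + s_\beta\theta_\beta\otimes e_\beta$ using $\tfrac{dg_{\alpha\beta}}{g_{\alpha\beta}}=\theta_\alpha-\theta_\beta$. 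The Leibniz rule \eqref{1.11} is immediate from the definition.

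It remains to verify integrability, $\nabla_1\circ\nabla_0=0$. Since the curvature is computed locally by $\nabla_1(\nabla_0 e_\alpha)=\nabla_1(\theta_\alpha\otimes e_\alpha)=d\theta_\alpha\otimes e_\alpha-\theta_\alpha\wedge\theta_\alpha\otimes e_\alpha = d\theta_\alpha\otimes e_\alpha$, and $\theta_\alpha$ is a closed form — indeed $d\theta_\alpha = \tfrac1N\,d\!\left(\tfrac{df_\alpha}{f_\alpha}\right)=0$ because $\tfrac{df_\alpha}{f_\alpha}$ is $d$-closed (it is a logarithmic differential of a meromorphic function) — the curvature vanishes identically, hence $\nabla_{a+1}\circ\nabla_a=0$ for all $a$.

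I expect the only real subtlety to be bookkeeping: making sure the meromorphic functions $f_\alpha$ can be chosen coherently so that $\{g_{\alpha\beta}^N = f_\alpha/f_\beta\}$ holds with $f_\alpha$ having the correct logarithmic pole/zero structure along $D$ (this is precisely unwinding the isomorphism $L^N\cong\mathcal{O}_X(\sum a_iD_i)$ in terms of a cocycle), and keeping track that $\theta_\alpha$ indeed lies in $\Omega^1_X(\log D)$ rather than in some $\Omega^1_X(\nu D)$ — which holds because in logarithmic coordinates $f_\alpha$ is a unit times $\prod_i (z_\alpha^i)^{a_i}$, so $\tfrac{df_\alpha}{f_\alpha} = \tfrac{d(\mathrm{unit})}{\mathrm{unit}} + \sum_i a_i\,\tfrac{dz_\alpha^i}{z_\alpha^i}$ has only simple poles. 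Everything else is the standard "a connection differs from another by a global $\Omega^1$-valued endomorphism" pattern, here in the logarithmic category. One could alternatively phrase the construction as: the Chern connection $\partial+\bar\partial$ of a smooth metric on $L$ is not logarithmic, but $\nabla := $ (holomorphic part built from $\theta_\alpha$) $+\,\bar\partial$ gives the desired $d=\partial+\bar\partial$ decomposition used in the introduction, and since we only need existence, the explicit cocycle construction above suffices.
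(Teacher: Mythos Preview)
Your proposal is correct and follows essentially the same route as the paper: both define the connection $1$-form locally as $\tfrac{1}{N}\,d\log$ of the meromorphic data coming from the isomorphism $L^N\cong\mc{O}_X(\sum a_iD_i)$, and deduce integrability from $d(d\log f)=0$. The only cosmetic difference is that the paper normalizes a local frame $e$ by requiring $e^N=\prod_i (z^i)^{-a_i}\sigma$ for the canonical section $\sigma$ and writes $\p e=\tfrac{1}{N}\p\log(e^N/\sigma)\cdot e$, whereas you phrase the same construction in \v{C}ech/transition-function language with $\theta_\alpha=\tfrac{1}{N}\tfrac{df_\alpha}{f_\alpha}$ and check the cocycle relation directly; your acknowledged ``bookkeeping'' about signs and conventions is exactly the content of matching these two presentations.
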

\begin{proof}
	Let $\sigma$ be the canonical meromorphic section of $\mc{O}_X\left(\sum_{i=1}^r a_i D_i\right)$, and let $e$ be a local frame of $L$ satisfying	
	\begin{align}\label{1.6}
		e^N=\prod_{i=1}^r (z^i)^{-a_i}\sigma.
	\end{align}
	We define 
	\begin{align}\label{1.5}
	\p e:=\frac{1}{N}\p\log \frac{e^N}{\sigma}  e=-\sum_{i=1}^{r}\frac{a_i}{N}\frac{dz^i}{z^i}e,	
	\end{align}
	which is taken valued in $\Omega^1_X(\log D)\otimes L$. It is well-defined since for any local frame $e'$ of $L$ with $e'^N=e^N$, one has  $\p(e'/e)=0$ so 
\begin{align*}
\p e'=-\sum_{i=1}^{r}\frac{a_i}{N}\frac{dz^i}{z^i}e'=\frac{\p e}{e} e'=\frac{e'}{e}\p e=\p(\frac{e'}{e})\otimes e+\frac{e'}{e}\p e,
\end{align*}
which satisfies (\ref{1.11}). Furthermore we define a logarithmic connection $d$ on $L$ by 
\begin{align*}
d:L\to \Omega^1_X(\log D)\otimes L\quad d (f\cdot e):=f\cdot\p e+d f\otimes e.
\end{align*}
 It induces a logarithmic connection on $\Omega^p_X(\log D)\otimes L$ by 
 \begin{align*}
 \begin{split}
 d(\omega\otimes e) =d\omega\otimes e+(-1)^{p}\omega\wedge \p e
 =d\omega\otimes e+(-1)^{p}\omega\wedge\left(-\sum_{i=1}^r\frac{a_i}{N}\frac{dz^i}{z^i}\right)\otimes e.
 \end{split}
 \end{align*}
By a direct calculation, one has $d^2=0$. Therefore, $d$ is an integrable logarithmic connection along $D$ on $L$.
\end{proof}

\subsection{Cyclic covering}\label{sub1.3}

In this subsection, let $L$ be a holomorphic line bundle over $X$ satisfies 
$$L^N=\mc{O}_X\left(\sum_{i=1}^r a_i D_i\right),$$
where $a_i\geq 0$ and $a_i\in\mb{Z}$. 
Let $s$ be the canonical section of $\mc{O}_X\left(\sum_{i=1}^r a_i D_i\right)$, and denote by $\mb{L}$ the total space of line bundle $L$. Let $\pi:\mb{L}\to X$ be the bundle projection. If $v\in \Gamma(\mb{L}, \pi^*L)$ is the tautological section, then the zero divisor of $\pi^*s-v^N$ defines an analytic subspace, say $X[\sqrt[n]{s}]$, in $\mb{L}$. We denote by $\o{X}[\sqrt[n]{s}]$ the normalization of $X[\sqrt[n]{s}]$, and $\o{\pi}:\o{X}[\sqrt[n]{s}]\to X$. We will call $\o{X}[\sqrt[n]{s}]$ the {\it cyclic cover obtained by taking the $N$-th root out of $s$}. The map $\o{\pi}$ is flat and finite morphism, $\o{X}[\sqrt[n]{s}]$ is a normal variety and 
\begin{align*}
\o{\pi}_*\mc{O}_{\o{X}[\sqrt[n]{s}]}=\bigoplus_{i=0}^{N-1}L^{-i}\left(\left\lfloor\frac{i}{N}\sum_{j=1}^ra_jD_j\right\rfloor\right),
\end{align*}
(see e.g. \cite[Section 2.11]{Kollar} or \cite[Corollary 3.11]{Viehweg}).

Let $e$ be  a local frame of $L$, any element of $L$ can be represented as the form $v \cdot e$, so as a complex manifold, the local coordinate  of $\mb{L}$ is given by $(z,v)$. In terms of local coordinates, then  
\begin{align*}
X[\sqrt[N]{s}]:=\{(z,v)\in \mb{L}|v^N-s(z)=0\},	
\end{align*}
where $s=s(z)e$. Therefore,  $(z,v)\in\mb{L}$ is a singular point of $X[\sqrt[n]{s}]$ if and only if 
\begin{align*}
\n(v^N-s(z))=(Nv^{N-1}, \n s(z))= 0,	
\end{align*}
which is equivalent to $z\in \text{Sing}(\sum_{i=1}^r a_i D_i)$. 

If $\sum_{i=1}^r a_i D_i$ is reduced and smooth, then $\text{Sing}(\sum_{i=1}^r a_i D_i)=\emptyset$ so 
 $X[\sqrt[n]{s}]$ is smooth. In this case, $X[\sqrt[n]{s}]=\o{X}[\sqrt[n]{s}]$. 
 \begin{prop}[{\cite[Lemma 17.1, 17.2]{BHPV}, \cite[Page 25]{Iacono}}]\label{prop2}
 Let $\pi: X[\sqrt[n]{s}]\to X$ be the $N$-cyclic covering of $X$ branched along a smooth divisor $D$ and determined by $L$, where $L^N=\mc{O}_X(D)$ and let $D'$ be the reduced divisor $\pi^{-1}(D)$ on $X[\sqrt[n]{s}]$, then 
\begin{itemize}
\item[(i)] $\mc{O}_{X[\sqrt[N]{s}]}(D')=\pi^*L$;
\item[(ii)]	$\pi^*D=N D'$;
\item[(iii)] $K_{X[\sqrt[N]{s}]}=\pi^*(K_X\otimes L^{N-1})$;
\item[(iv)] $\pi_*\mc{O}_X=\bigoplus_{i=0}^{N-1}L^{-i}$;
\item[(v)] $\pi^*\Omega^{\bullet}_X(\log D)=\Omega^{\bullet}_{X[\sqrt[n]{s}]}(\log (\pi^*D))$.
\end{itemize}
\end{prop}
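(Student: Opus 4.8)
The plan is to establish the five statements locally and then patch. I would start from the explicit local description of $X[\sqrt[N]{s}]$ given above: in a logarithmic coordinate system $(U,\{z^1,\dots,z^n\})$ with $D\cap U=\{z^1=0\}$ and a local frame $e$ of $L$ with $s=z^1 e$ (after rescaling $e$), the covering is cut out in $\mb{L}|_U$ by $v^N=z^1$, so $(z^2,\dots,z^n,v)$ are local holomorphic coordinates on $X[\sqrt[N]{s}]$ and $\pi$ is the map $(v,z^2,\dots,z^n)\mapsto(v^N,z^2,\dots,z^n)$. From this, $\pi^{-1}(D)$ is $\{v=0\}$ with multiplicity $N$ in $\pi^*D$, which gives (ii), and the reduced structure $D'=\{v=0\}$; since $v$ is a global section of $\pi^*L$ (it is the restriction of the tautological section, and $\pi^*(z^1 e)=v^N\cdot\pi^*e$ forces $v$ to be a frame-compatible section of $\pi^*L$ vanishing simply on $D'$), we get (i), namely $\mc{O}_{X[\sqrt[N]{s}]}(D')\cong\pi^*L$.

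For (iv), the standard argument is that $\pi_*\mc{O}_{X[\sqrt[N]{s}]}$ decomposes under the $\mb{Z}/N$-action generated by $v\mapsto\zeta v$ ($\zeta$ a primitive $N$-th root of unity) into eigensheaves, the $i$-th eigensheaf being generated by $v^i$, and multiplication by $v^i$ identifies it with $L^{-i}$ (using $v^N=\pi^*s\in\pi^*\mc{O}_X(D)$, i.e. $v^N$ trivializes $\pi^*L^N$ away from $D$ but has a pole of the right order to make the eigensheaf exactly $L^{-i}$ for $0\le i\le N-1$). For (iii), I would use Hurwitz: $K_{X[\sqrt[N]{s}]}=\pi^*K_X\otimes\mc{O}(R)$ where $R$ is the ramification divisor; here the ramification is along $D'$ with multiplicity $N-1$ (from $dv^N=Nv^{N-1}dv$), so $\mc{O}(R)=\mc{O}((N-1)D')=\pi^*L^{N-1}$ by (i), giving $K_{X[\sqrt[N]{s}]}=\pi^*(K_X\otimes L^{N-1})$. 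Alternatively one checks this directly in the local coordinates: $dv\wedge dz^2\wedge\cdots\wedge dz^n$ versus $\pi^*(dz^1\wedge\cdots\wedge dz^n)=Nv^{N-1}dv\wedge dz^2\wedge\cdots$.

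For (v), which I expect to be the main point of interest here, I would again argue in local coordinates. On $X$, $\Omega^1_X(\log D)$ near a point of $D$ is freely generated by $dz^1/z^1,dz^2,\dots,dz^n$; pulling back, $\pi^*(dz^1/z^1)=d(v^N)/v^N=N\,dv/v$, and $\pi^*dz^j=dz^j$ for $j\ge 2$. Since $dv/v,dz^2,\dots,dz^n$ freely generate $\Omega^1_{X[\sqrt[N]{s}]}(\log D')$ near $D'=\{v=0\}$, and the factor $N$ is a unit, the pullback map $\pi^*\Omega^1_X(\log D)\to\Omega^1_{X[\sqrt[N]{s}]}(\log D')$ is an isomorphism on a neighborhood of $D'$; away from the branch locus $\pi$ is étale so the statement is trivial there. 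Taking exterior powers gives the isomorphism for all $p$, and it is visibly compatible with $d$, so $\pi^*\Omega^\bullet_X(\log D)\cong\Omega^\bullet_{X[\sqrt[N]{s}]}(\log D')$ as complexes. The only subtlety is checking that these local isomorphisms are independent of the choice of coordinate system and frame (so that they glue), which follows because $\pi^*$ is canonically defined and the target sheaf is locally free; I would note this patching explicitly but not belabor it. The main obstacle, such as it is, is purely bookkeeping: making sure the reduced-divisor conventions and the multiplicity-$N$ ramification are handled consistently across (i)–(iii) and (v); there is no real analytic difficulty since everything reduces to the one-variable model $v\mapsto v^N$.
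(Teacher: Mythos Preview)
The paper does not supply its own proof of this proposition; it is stated with citations to \cite{BHPV} and \cite{Iacono} as a known result, so there is no ``paper's proof'' to compare against. Your argument is correct and is essentially the standard one found in those references: the local model $(v,z^2,\dots,z^n)\mapsto(v^N,z^2,\dots,z^n)$ for the branched cover, the tautological section of $\pi^*L$ cutting out $D'$ for (i)--(ii), the Hurwitz formula combined with (i) for (iii), the $\mb{Z}/N$-eigensheaf decomposition for (iv), and the explicit pullback $\pi^*(dz^1/z^1)=N\,dv/v$ of logarithmic generators for (v).

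One small wording issue in your sketch of (iv): $v^N=\pi^*s$ is a holomorphic section of $\pi^*L^N$, not something with a pole. The cleaner way to phrase the identification is that any local function on $X[\sqrt[N]{s}]$ over $U$ expands uniquely as $\sum_{i=0}^{N-1}f_i(z)\,v^i$; since $v$ is a local frame of $\pi^*L$ (being the restriction of the tautological section), $v^i$ transforms as a section of $\pi^*L^i$ under change of frame on $L$, so for $f_i v^i$ to be an honest function the coefficient $f_i$ must transform as a section of $L^{-i}$. This is what you mean, and it is correct; only the parenthetical about poles is misleading.
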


\section{Logarithmic $\b{\p}$-equation}\label{sec2}

In this section, by using the cyclic covering trick, we will solve a logarithmic $\b{\p}$-equation on a compact K\"ahler manifold associated with a smooth divisor. 
Let $X$ be a compact K\"ahler manifold and  $D=\sum_{i=1}^r D_i$ be a smooth divisor, i.e. $D_i\cap D_j=\emptyset$ for $i\neq j$, each $D_i$ is smooth. Let $L$ be a holomorphic line bundle over $X$ satisfying $L^N=\mc{O}_X(D)$, and we consider the following logarithmic $\b{\p}$-equation:
\begin{align}\label{dbar1}
\b{\p}x=\p\alpha 	
\end{align}
for any $\alpha\in A^{0,q}(X,\Omega^p_X(\log D)\otimes L^{-1})$ with $\b{\p}\p\alpha=0$.

\subsection{Logarithmic $\b{\p}$-equation}

Denote by
\begin{align*}
	A^{0,q}(X,\Omega^p_X(\log D)\otimes L)
\end{align*}
the space of smooth $(0,q)$-forms with valued in $\Omega^p_X(\log D)\otimes L$. From Proposition \ref{prop1}, if $L^N=\mc{O}_X\left(\sum_{i=1}^r a_i D_i\right)$, then there is an integral logarithmic connection $d$ along $D$ on $L$. So there is a logarithmic de Rham complex of $(L, d)$, 
$$(\Omega^{\bullet}_X(\log D)\otimes L, d).$$
By taking $d=\p+\b{\p}$ and using Proposition \ref{prop1}, we have 
\begin{equation*}
  \p: A^{0,q}(X,\Omega^p_X(\log D)\otimes L)\to A^{0,q}(X,\Omega^{p+1}_X(\log D)\otimes L)
\end{equation*}
and 
\begin{equation*}
  \b{\p}: A^{0,q}(X,\Omega^p_X(\log D)\otimes L)\to A^{0,q+1}(X,\Omega^{p}_X(\log D)\otimes L).
\end{equation*}
For any $\alpha\in A^{0,q}(X,\Omega^p_X(\log D)\otimes L)$ with $\b{\p}\p\alpha=0$, one may ask that whether the following logarithmic $\b{\p}$-equation
\begin{align}\label{dbar}
\b{\p}x=\p\alpha 	
\end{align}
has a solution $x\in A^{0,q-1}(X,\Omega^{p+1}_X(\log D)\otimes L)$. 
\begin{rem}
For the case that $L$ is trivial or $L=\mc{O}_X(-D)$, and $X$ is a compact K\"ahler manifold, the logarithmic $\b{\p}$-equation (\ref{dbar}) is solvable \cite[Theorem 0.1, Theorem 0.2]{Wan}. 
\end{rem}

\subsection{Solving the $\b{\p}$-equation} In this subsection, we will solve the logarithmic $\b{\p}$-equation (\ref{dbar1}) by using the cyclic covering trick. 

Since $D=\sum_{i=1}^r D_i$ is a smooth divisor in $X$ and $L^N=\mc{O}_X(D)$, from the discussions in Subsection \ref{sub1.3}, $Y:=X[\sqrt[N]{s}]$ is a compact complex submanifold of $\mb{L}$. Let $\pi: Y\to X$ be the cyclic covering. For any point $x\in D$, one can take a small neighborhood $U$ around $x$, such that 
\begin{align*}
U\cap D=\{z^1\cdots z^k=0\}.	
\end{align*}
Since $D$ is smooth, by take $U$ small sufficiently, so $k=1$. 
 Without loss of generality,  for any $x\in D$,  one may take a local coordinate neighborhood around $x$ such that 
\begin{align*}
\pi: (w^1,\cdots,w^n)\mapsto (z^1=(w^1)^{N}, z^2=w^2,\cdots, z^n=w^n).	
\end{align*}
For any $\alpha\in A^{0,q}(X,\Omega^p_X(\log D)\otimes L^{-1})$, locally 
\begin{align*}
\alpha=f \frac{dz^1}{z^1}\wedge dz^2\wedge \cdots\wedge dz^p\otimes e^*	
\end{align*}
where $f$ is a local smooth $(0,q)$-form and $e^*$ is a local frame of $L^{-1}$, then 
\begin{align}\label{1.2}
\begin{split}
\pi^*\alpha=\pi^*f N\frac{dw^1}{w^1}\wedge dw^2\wedge \cdots\wedge dw^p\otimes \pi^*e^* &\in A^{0,q}(Y,\Omega^p_Y(\log (\pi^*D))\otimes \pi^*L^{-1})\\
&\cong A^{0,q}(Y,\Omega^p_Y(\log D')\otimes \mc{O}_Y(-D')),
\end{split}	
\end{align}
where the isomorphism  follows from Proposition \ref{prop2} (i), which is given by dividing the canonical section  of $\mc{O}_X(-D')$.

Firstly, we note that 
\begin{align}\label{1.3}
	 A^{0,q}(Y,\Omega^p_Y(\log D')\otimes \mc{O}_Y(-D'))\cong  A^{n,q}(Y, T^{n-p}_Y(-\log D'))\hookrightarrow A^{p,q}(Y).
\end{align}
where $T^{n-p}_Y(-\log D'):=\wedge^{n-p}\left(\Omega^1_Y(\log D')\right)^*$.
In fact, the first isomorphism is given by the following: for any $\beta\in A^{0,q}(Y,\Omega^p_Y(\log D')\otimes \mc{O}_Y(-D'))$, in terms of local coordinates, one has
\begin{align*}
\begin{split}
\beta 
&=g \frac{dw^1}{w^1}\wedge dw^2\wedge \cdots\wedge dw^p w^1\\
&=gdw^1\wedge \cdots\wedge dw^p\\
&\simeq g	dw^1\wedge \cdots\wedge dw^n\otimes \left(\frac{\p}{\p w^{p+1}}\otimes \cdots\otimes \frac{\p}{\p w^r}\otimes\frac{\p}{\p w^{r+1}}\otimes\cdots\otimes \frac{\p}{\p w^n}\right)\\
&\in A^{n,q}(Y, T^{n-p}_Y(-\log D')).
\end{split}
\end{align*}
Here $g$ is a local smooth $(0,q)$-form. 

Let $\sigma'$ be the canonical meromorphic section of $\mc{O}_Y(-D')$, then 
\begin{align}\label{1.1}
 \begin{split}
 	\p(\pi^* e^*)&:=\p\log \frac{\pi^* e^*}{\sigma'}\otimes \pi^*e^*\\
 	&=\frac{1}{N}\p\log \frac{\pi^*e^{*N}}{\sigma'^N}\otimes \pi^*e^*\\
 	&=\pi^*(\p e^*), 
 \end{split}	
 \end{align}
where the last equality holds by (\ref{1.5}) and Proposition \ref{prop2} (ii). 

 For any $\alpha\in A^{0,q}(X,\Omega^{p-1}_X(\log D)\otimes L^{-1})$ with $\b{\p}\p\alpha=0$, by (\ref{1.1}), then 
 \begin{align*}
 \b{\p}\p((\sigma')^{-1}\pi^*\alpha)=\b{\p}((\sigma')^{-1}\p(\pi^*\alpha))=(\sigma')^{-1}\pi^*(\b{\p}\p\alpha)=0.	
 \end{align*}
From (\ref{1.2}) and (\ref{1.3}), we have 
\begin{align*}
(\sigma')^{-1}\pi^*\alpha\in 	A^{n,q}(Y, T^{n-p}_Y(-\log D'))\hookrightarrow A^{p,q}(Y).
\end{align*}
\begin{prop}\label{prop3}
If 	$(X,\omega)$ is a compact K\"ahler manifold, then $Y$ is also a compact K\"ahler manifold. 
\end{prop}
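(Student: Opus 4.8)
The plan is to show that $Y$ carries a K\"ahler metric by exhibiting an explicit closed positive $(1,1)$-form, built from the pullback of $\omega$ together with a correction term supported near the ramification locus coming from the fiber coordinate $v$ of the line bundle $\mb{L}\to X$. Recall that $Y=X[\sqrt[N]{s}]$ sits inside the total space $\mb{L}$ of $L$ as the zero locus of $\pi^*s-v^N$, where $v$ is the tautological section of $\pi^*L$; since $D$ is smooth, $Y$ is a smooth compact complex submanifold of $\mb{L}$. The first step is therefore to equip $L$ with a smooth Hermitian metric $h$ and form, on the total space $\mb{L}$, the function $|v|^2_h$ (the squared fiberwise norm of the tautological section). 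A standard computation gives $i\p\b{\p}|v|^2_h = $ (a positive form along the fibers) $-\,|v|^2_h\,\pi^*\Theta_h$, where $\Theta_h$ is the curvature of $(L,h)$; in particular on the bounded region of $\mb{L}$ where $Y$ lives (note $|v|^2_h = |s|^2_h$ is bounded on $Y$), the form $C\pi^*\omega + i\p\b{\p}|v|^2_h$ is strictly positive on $\mb{L}$ near $Y$ for $C\gg 0$, because the fiber directions are controlled by the $|v|^2$-term and the base directions by $C\pi^*\omega$ minus a bounded error.

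Concretely, I would set $\widetilde{\omega} := C\,\pi^*\omega + i\p\b{\p}|v|^2_h$ on $\mb{L}$ for a sufficiently large constant $C$, check that it is a closed real $(1,1)$-form, and verify positivity on a neighborhood of the compact set $Y\subset\mb{L}$: along fiber directions the Hessian of $|v|^2_h$ is positive definite, along base directions $C\pi^*\omega$ dominates the bounded curvature term $|v|^2_h\pi^*\Theta_h$ once $C$ is large, and the mixed terms are absorbed by Cauchy–Schwarz. Then $\omega_Y := \iota^*\widetilde{\omega}$, where $\iota\colon Y\hookrightarrow\mb{L}$ is the inclusion, is a closed positive $(1,1)$-form on $Y$, i.e. a K\"ahler form; hence $Y$ is K\"ahler, and it is compact since $\pi\colon Y\to X$ is finite and $X$ is compact. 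An alternative, essentially equivalent route is to observe that the finite morphism $\pi$ is in particular proper with finite fibers, so one can average: $\frac{1}{N}\sum_{g}g^*\big(\text{something}\big)$ using the cyclic group action, but the total-space construction above is cleaner and avoids worrying about the branch locus.

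The main obstacle — really the only subtle point — is the positivity check near the ramification divisor $D'=\pi^{-1}(D)_{\mathrm{red}}$, where $Y\to X$ is branched and $v$ vanishes: there the correction term $i\p\b{\p}|v|^2_h$ degenerates along the fiber direction of $\mb{L}$ (since $v=0$), so one must confirm that the tangent space $T_yY$ for $y\in D'$ is not too close to the fiber direction of $\mb{L}$. But this is automatic: since $\pi^*s-v^N$ cuts out $Y$ transversally (smoothness of $D$ forces $ds\neq 0$ along $D$), the tangent space $T_yY$ always surjects onto $T_{\pi(y)}X$, so the base component $C\pi^*\omega$ of $\widetilde\omega$ restricted to $T_yY$ is already strictly positive on a complement of the kernel, and the kernel (the vertical part of $T_yY$, which is nonzero only over $D$) is handled by noting that there $Y$ locally looks like $\{v^N=z^1\}$ with coordinate $v=w^1$, so $|v|^2_h$ pulls back to $|w^1|^2$ times a smooth positive function, whose $i\p\b{\p}$ is positive in the $w^1$-direction. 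I expect the write-up of this step to require just a short local-coordinate verification rather than anything deep. I would close by remarking that this is the standard fact that cyclic covers of K\"ahler manifolds branched over smooth divisors are K\"ahler (cf. the references to \cite{BHPV}, \cite{Viewweg} already cited), included here for completeness.

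\begin{proof}[Sketch]
Fix a smooth Hermitian metric $h$ on $L$ with curvature $\Theta_h$, and on the total space $\mb{L}$ of $L$ consider the smooth function $\rho := |v|^2_h$, where $v$ is the tautological section of $\pi^*L$. A direct computation yields
\begin{align*}
i\p\b{\p}\rho = h\,(Dv)\wedge\overline{(Dv)} - \rho\,\pi^*\Theta_h,
\end{align*}
which is strictly positive along the fibers of $\mb{L}\to X$ and differs from a positive form on the base by the bounded term $\rho\,\pi^*\Theta_h$. Set $\widetilde\omega := C\,\pi^*\omega + i\p\b{\p}\rho$ on $\mb{L}$. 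Since $Y=X[\sqrt[N]{s}]\subset\mb{L}$ is compact, $\rho|_Y=|s|^2_h$ is bounded, so for $C$ large enough $\widetilde\omega$ is a closed real $(1,1)$-form that is positive definite on a neighborhood of $Y$ in $\mb{L}$: on the horizontal directions $C\pi^*\omega$ dominates $\rho\,\pi^*\Theta_h$, on the vertical directions $h\,(Dv)\wedge\overline{(Dv)}$ is positive, and the mixed terms are absorbed by Cauchy–Schwarz. Restricting, $\omega_Y := \iota^*\widetilde\omega$ (with $\iota\colon Y\hookrightarrow\mb{L}$) is a K\"ahler form on $Y$: closedness is immediate, and positivity follows because $T_yY$ surjects onto $T_{\pi(y)}X$ (transversality of $\pi^*s-v^N$, using smoothness of $D$), so in suitable local coordinates where $Y=\{v^N=z^1\}$ with $v=w^1$ one checks $\iota^*\rho = |w^1|^2\cdot(\text{smooth positive})$, whose $i\p\b{\p}$ is positive in the $w^1$-direction, while the remaining directions are controlled by $C\pi^*\omega$. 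Finally $Y$ is compact since $\pi$ is finite and $X$ is compact. Hence $Y$ is a compact K\"ahler manifold.
\end{proof}
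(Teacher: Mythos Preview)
Your proposal is correct and takes essentially the same approach as the paper: both form $\widetilde\omega = C\,\pi^*\omega + i\p\b{\p}|v|^2_h$ on the total space $\mb{L}$, use compactness of $Y$ to bound $|v|^2_h$ and hence the curvature error, conclude that $\widetilde\omega$ is K\"ahler on a neighborhood $Y_R\subset\mb{L}$ of $Y$ for $C$ large, and then restrict. One remark: your extended discussion of the ramification locus $D'$ is unnecessary, since once $\widetilde\omega$ is positive on the full tangent space of the ambient $(n{+}1)$-fold $\mb{L}$ near $Y$, its restriction to \emph{any} complex submanifold is automatically positive---no separate check at $D'$ is needed.
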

\begin{proof}
By the construction of cyclic covering, $Y$ is a compact complex submanifold of the total space $\mb{L}$. Let $h$ be a Hermitian metric on $L$, since $Y$ is compact, we may assume that 
\begin{align}\label{2.1}
Y\hookrightarrow Y_R:=\{v e\in L, \|v\|^2_h=h|v|^2<R\}\subset \mb{L},	
\end{align}
for some large $R>0$. By a direct calculation, one has 
\begin{align*}
\sqrt{-1}\p\b{\p}\|v\|^2_h=\sqrt{-1}(\p\b{\p}\log h)\|v\|^2_h+\sqrt{-1}h\delta v\wedge \delta \b{v},
\end{align*}
where $\delta v:=dv+v\p\log h$. When restricting to the open manifold $Y_R$, one has 
\begin{align*}
\sqrt{-1}\p\b{\p}\|v\|^2_h\geq  -C R\pi^*\omega+\sqrt{-1}h\delta v\wedge \delta \b{v},	
\end{align*}
where $C>0$ is a uniform constant satisfying $\sqrt{-1}\p\b{\p}\log h>-C\omega$, and $\pi: Y_R(\subset\mb{L})\to X$ is the natural projection.
Thus, $C_1\pi^*\omega+\sqrt{-1}\p\b{\p}\|v\|^2_h$ is a K\"ahler metric on $Y_R$ for any $C_1>CR$. From (\ref{2.1}) and by the restriction, one obtains a K\"ahler metric on $Y$. 	
\end{proof}
From \cite[Theorem 0.2]{Wan}, one can solve the equation $\b{\p}x=\p((\sigma')^{-1}\pi^*\alpha)$ with 
\begin{align}\label{1.4}
x=\mc{I}^*\b{\p}^*_E\mb{G}''_E(\mc{I}^*)^{-1}\p((\sigma')^{-1}\pi^*\alpha)\in A^{n,q-1}(Y, T^{n-p-1}_Y(-\log D')).
\end{align}
Here $(\mc{I}^*)^{-1}: A^{n,q}(Y, T^{n-p-1}_Y(-\log D')))\to A^{n,q}(Y, (E_Y^{n-p-1})^*)$ is the canonical isomorphism, $E_Y^{n-p-1}$ is the holomorphic vector bundle associated to the local free sheaf $\Omega^{n-p-1}_Y(\log D')$. 

Now we define the following isomorphism by 
\begin{align*}
\begin{split}
(\mc{I}^*)^{-1}:& A^{0,q}(X,\Omega^{p+1}_X(\log D)\otimes L^{-1})\to A^{n,q}(X, (E_X^{n-p-1})^*\otimes L^{N-1})\\
&\alpha=f\frac{dz^1}{z^1}\wedge dz^2\wedge\cdots\wedge dz^{p+1}\otimes e^*\mapsto fdz\otimes (e_{p+2}\otimes \cdots\otimes e_n )\otimes (e^*)^{1-N},
\end{split}
\end{align*}
where $\{e_{i_1}\otimes\cdots\otimes e_{i_{n-p-1}}\}$ is a basis of $(E^{n-p-1}_X)^*$.  The definition is well-defined since 
\begin{align*}
	\alpha&=f\frac{dz^1}{z^1}\wedge dz^2\wedge\cdots\wedge dz^{p+1}\otimes e^*\\
	&\simeq f\frac{dz}{z^1}\otimes (\frac{\p}{z^{p+2}}\otimes\cdots\otimes\frac{\p}{\p z^n})\otimes e^*\\
	&=f\frac{dz}{e^N/\sigma}\otimes (\frac{\p}{z^{p+2}}\otimes\cdots\otimes\frac{\p}{\p z^n})\otimes e^*\\
	&=f dz\otimes (\frac{\p}{z^{p+2}}\otimes\cdots\otimes\frac{\p}{\p z^n})\otimes (e^*)^{1-N}\otimes \sigma\\
	&\simeq fdz\otimes (e_{p+2}\otimes \cdots\otimes e_n )\otimes (e^*)^{1-N}\otimes \sigma\\
	&\simeq fdz\otimes (e_{p+2}\otimes \cdots\otimes e_n )\otimes (e^*)^{1-N}.
\end{align*}
Here we denote $dz:=dz^1\wedge \cdots\wedge dz^n$. 
So one has the following lemma.
\begin{lemma}\label{lemma1}
It holds the following commutative diagram:
\begin{equation*}
	\begin{CD}
A^{n,q}(Y, T^{n-p-1}_Y(-\log D')) @>(\mc{I}^*)^{-1}>> A^{n,q}(Y, (E_Y^{n-p-1})^*)\\
@AA(\sigma')^{-1}\cdot\pi^*A @AA(\sigma')^{N-1}\cdot\pi^*A\\
A^{0,q}(X,\Omega^{p+1}_X(\log D)\otimes L^{-1}) @>(\mc{I}^*)^{-1}>> A^{n,q}(X, (E_X^{n-p-1})^*\otimes L^{N-1})
\end{CD}
\end{equation*}
\end{lemma}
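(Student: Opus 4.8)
The plan is to verify that the square commutes by tracking a general logarithmic form through both compositions and showing the two results agree. Start with an arbitrary $\alpha\in A^{0,q}(X,\Omega^{p+1}_X(\log D)\otimes L^{-1})$ written locally as $\alpha=f\,\frac{dz^1}{z^1}\wedge dz^2\wedge\cdots\wedge dz^{p+1}\otimes e^*$ in a logarithmic coordinate system with $D\cap U=\{z^1=0\}$ and $e$ a local frame of $L$ normalized as in \eqref{1.6}. The right-and-down route sends $\alpha$ to $f\,dz\otimes(e_{p+2}\otimes\cdots\otimes e_n)\otimes(e^*)^{1-N}$ by the bottom horizontal map, then applies $(\sigma')^{N-1}\cdot\pi^*$; the down-and-right route first applies $(\sigma')^{-1}\cdot\pi^*$ (landing in $A^{n,q}(Y,T^{n-p-1}_Y(-\log D'))$ via \eqref{1.2} and \eqref{1.3}) and then the top horizontal isomorphism $(\mc{I}^*)^{-1}$. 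Both should be computed in the local coordinate $(w^1,\dots,w^n)$ on $Y$ with $z^1=(w^1)^N$, $z^j=w^j$ for $j\geq 2$.

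The key steps, in order: (1) compute $\pi^*\alpha$ explicitly, using $\pi^*\left(\frac{dz^1}{z^1}\right)=N\frac{dw^1}{w^1}$ and $\pi^*(dz^j)=dw^j$, as in \eqref{1.2}, so $\pi^*\alpha=N\,\pi^*f\,\frac{dw^1}{w^1}\wedge dw^2\wedge\cdots\wedge dw^{p+1}\otimes\pi^*e^*$; (2) identify $\pi^*e^*$ with a section of $\mc{O}_Y(-D')$ tensored appropriately using Proposition \ref{prop2}(i)--(ii), so that dividing by $\sigma'$ and multiplying accounts for the powers correctly — concretely, $\pi^*L^{-1}\cong\mc{O}_Y(-D')$ and $(\sigma')^{-1}$ converts the $\mc{O}_Y(-D')$-factor into the local function $w^1$ up to a unit; (3) run $\pi^*\alpha$ through the isomorphism \eqref{1.3}, i.e. write $(\sigma')^{-1}\pi^*\alpha = N\,\pi^*f\,dw^1\wedge\cdots\wedge dw^{p+1}\simeq N\,\pi^*f\,dw\otimes(\tfrac{\p}{\p w^{p+2}}\otimes\cdots\otimes\tfrac{\p}{\p w^n})$ where $dw:=dw^1\wedge\cdots\wedge dw^n$; (4) apply $(\mc{I}^*)^{-1}$ on $Y$, turning the polyvector field into the dual basis vectors and recording the correct twisting bundle; (5) independently compute $(\sigma')^{N-1}\cdot\pi^*$ applied to the image of $\alpha$ under the bottom map, again in $w$-coordinates, using $\pi^*(dz)=\pi^*(dz^1\wedge\cdots\wedge dz^n)=N(w^1)^{N-1}dw$; (6) compare: the discrepancy factors $N$, $(w^1)^{N-1}$, and the powers of $\sigma'$ versus the normalization $e^N=\prod(z^i)^{-a_i}\sigma$ (here $a_1=1$, other $a_i=0$) should cancel exactly, using \eqref{1.1} to match $\p(\pi^*e^*)=\pi^*(\p e^*)$ for the twisting.

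The main obstacle I expect is bookkeeping the several twisting line bundles consistently: one must be careful that $(E_Y^{n-p-1})^*$ on $Y$ really is $\pi^*(E_X^{n-p-1})^*$ (by Proposition \ref{prop2}(v)), that the factor $(\sigma')^{N-1}$ on the right vertical arrow is exactly what converts $\pi^*L^{N-1}$-valued forms on $X$ into honest $(E_Y^{n-p-1})^*$-valued forms on $Y$ (again via Proposition \ref{prop2}(i) and the relation $\pi^*D=ND'$), and that the Jacobian factor $N(w^1)^{N-1}$ from $\pi^*dz$ combines with the $1/z^1$ absorbed in the identification $dz/z^1 \simeq \text{polyvector}$ so that no spurious zero or pole along $D'$ remains. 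Once these are lined up, the commutativity is a one-line local identity; the content is entirely in choosing the normalizations in \eqref{1.6}, \eqref{1.5} so that everything is compatible, which the definitions of the two $(\mc{I}^*)^{-1}$ maps have already arranged. I would therefore present the proof as a direct local computation of both paths followed by the observation that they coincide.
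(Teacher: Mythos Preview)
Your proposal is correct and follows essentially the same approach as the paper: a direct local computation of both compositions starting from $\alpha=f\,\frac{dz^1}{z^1}\wedge dz^2\wedge\cdots\wedge dz^{p+1}\otimes e^*$, with the cancellation of the factors $N$, $(w^1)^{N-1}$, and the powers of $\sigma'$ hinging on the normalization \eqref{1.6} (equivalently, $(\pi^*e^*/(w^1\sigma'))^N=1$). The only minor remark is that the identity \eqref{1.1} you cite is not actually needed here---the commutativity is purely a matter of matching frames and Jacobians, not connections.
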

\begin{proof}
One can check the above  commutative diagram directly. For any 	$$\alpha=f\frac{dz^1}{z^1}\wedge dz^2\wedge\cdots\wedge dz^{p+1}\otimes e^*\in A^{0,q}(X,\Omega^{p+1}_X(\log D)\otimes L^{-1}),$$ one has 
\begin{align*}
\begin{split}
(\sigma')^{N-1}\pi^*((\mc{I}^*)^{-1}(\alpha))&=(\sigma')^{N-1}\pi^*(fdz\otimes (e_{p+2}\otimes \cdots\otimes e_n )\otimes (e^*)^{1-N})\\
&=	N \pi^*f \wedge dw\otimes(\wt{e}_{p+2}\otimes\cdots\otimes\wt{e}_{n})(\pi^*e^*/(w_1\sigma'))^{1-N}\\
&=N \pi^*f \wedge dw\otimes(\wt{e}_{p+2}\otimes\cdots\otimes\wt{e}_{n})(\pi^*e^*/(w_1\sigma')),
\end{split}
\end{align*}
where $\wt{e}_i:=\pi^* e_i$, $dw=dw^1\wedge\cdots\wedge dw^n$  and the third equality holds since one can take the local frame $e^*$ satisfies $(\pi^*e^*/(w_1\sigma'))^{N}=1$ as (\ref{1.6}). While
\begin{align*}
\begin{split}
	(\mc{I}^*)^{-1}(\sigma')^{-1}\pi^*(\alpha)&=(\mc{I}^*)^{-1}(\sigma')^{-1}\left(N\pi^* f\frac{dw^1}{w^1}\wedge dw^2\wedge \cdots\wedge dw^p\otimes \pi^*e^*\right)\\
	&=N \pi^*f \wedge dw\otimes(\wt{e}_{p+2}\otimes\cdots\otimes\wt{e}_{n})(\pi^*e^*/(w_1\sigma'))\\
	&=(\sigma')^{N-1}\pi^*((\mc{I}^*)^{-1}(\alpha)).
\end{split}	
\end{align*}

\end{proof}
From (\ref{1.4}) and Lemma \ref{lemma1}, one has 
$$(\mc{I}^*)^{-1}x=\b{\p}^*_E\mb{G}''_E(\mc{I}^*)^{-1}\p((\sigma')^{-1}\pi^*\alpha),$$
which is a solution of $\b{\p}y= (\mc{I}^*)^{-1}(\sigma')^{-1}\pi^*(\p\alpha)=(\sigma')^{N-1}\pi^*((\mc{I}^*)^{-1}\p\alpha)=(\sigma')^{N-1}\pi^*\beta$, where $\beta=(\mc{I}^*)^{-1}\p\alpha\in A^{n,q}(X, (E_X^{n-p-1})^*\otimes L^{N-1})$. For any smooth Hermitian metric $h$ on the vector space $(E_X^{n-p-1})^*\otimes L^{N-1}$ and any harmonic element  $\eta\in A^{n,q}(X, (E_X^{n-p-1})^*\otimes L^{N-1})$, i.e. $\b{\p}\eta=0=\b{\p}*'\eta$, where $*'$ is the Hodge $*$-operator. Then 
\begin{align}
\begin{split}
\langle\beta,\eta\rangle &=\int_X \beta\wedge *'\eta\\
&=\frac{1}{N}\int_Y \pi^*(\beta\wedge *'\eta)	\\
&=\frac{1}{N}\int_Y (\sigma')^{1-N}\b{\p}((\mc{I}^*)^{-1}x)\wedge \pi^*(*'\eta)\\
&=\frac{1}{N}(-1)^{n+q}\int_Y(\sigma')^{1-N}(\mc{I}^*)^{-1}x\wedge \pi^*\b{\p}(*'\eta)=0,
\end{split}
\end{align}
where the second equality follows from \cite[Lemma 2.2]{Wells}. It follows that 
\begin{align}\label{harmonic}
	\mb{H}(\beta)=0.
\end{align}
 By the Hodge theorem for bundle-valued and noting $\b{\p}\beta=0$, we have 
\begin{align*}
\beta=\b{\p}\b{\p}^*\mb{G}\beta+\b{\p}^*\b{\p}\mb{G}\beta+\mb{H}(\beta)=\b{\p}\b{\p}^*\mb{G}\beta=\b{\p}\b{\p}^*\mb{G}(\mc{I}^*)^{-1}\p\alpha, 	
\end{align*}
which is equivalent to 
\begin{align*}
(\mc{I}^*)^{-1}\left(\b{\p}(\mc{I}^*\b{\p}^*\mb{G}(\mc{I}^*)^{-1}\p\alpha)-\p\alpha\right)=0.	
\end{align*}
Since $(\mc{I}^*)^{-1}$ is an isomorphism, so we obtain a solution \begin{align}\label{2.2}
 x=\mc{I}^*\b{\p}^*\mb{G}(\mc{I}^*)^{-1}\p\alpha\in A^{0,q-1}(X,\Omega^{p+1}_X(\log D)\otimes L^{-1})	
 \end{align}
of the equation $\b{\p}x=\p\alpha$.  In one word, we obtain
\begin{thm}\label{thm1}
Let $X$ be a compact K\"ahler manifold and $D=\sum_{i=1}^rD_i$ be a smooth divisor in $X$, let $L$ be a holomorphic line bundle over $X$ with $L^N=\mc{O}_X(D)$.  
For any $\alpha\in A^{0,q}(X,\Omega^p_X(\log D)\otimes L^{-1})$ with $\b{\p}\p\alpha=0$, the following equation 
\begin{align}\label{dbar3}
\b{\p}x=\p\alpha
\end{align}
has a solution $x\in A^{0,q-1}(X,\Omega^{p+1}_X(\log D)\otimes L^{-1})$.	
\end{thm}

\section{Some applications}\label{sec3}

In this section, we will give some applications to Theorem \ref{thm1}. Throughout this section, let $X$ be a compact K\"ahler manifold of dimension $n$ and $D=\sum_{i=1}^r D_i$ be a smooth divisor, let $L$ be a holomorphic line bundle over $X$ with $L^N=\mc{O}_X(D)$. 

\subsection{Closedness of logarithmic forms}

The theory of logarithmic forms has played a very important role in various aspects of analytic-algebraic geometry, in which the understanding of the closedness of logarithmic forms is fundamental. In $1971$, Deligne \cite[(3.2.14)]{Del71} proved the $d$-closedness of logarithmic forms on a smooth complex quasi-projective variety by showing the degeneration of logarithmic Hodge to de Rham spectral sequence.  In $1995$,  Noguchi \cite{Nog} gave a short proof of this result. In \cite[Corollary 0.3]{Wan}, K. Liu, S. Rao and the author generalized Deligne's result and obtained: if $\alpha\in A^{0,0}(X,\Omega^p_X(\log D))$ with $\b{\p}\p\alpha=0$ then $\p\alpha=0$. 

As the first application of Theorem $\ref{thm1}$, we obtain
\begin{cor}\label{Cor1}
	If $\alpha\in A^{0,0}(X,\Omega^p_X(\log D)\otimes L^{-1})$ with $\b{\p}\p\alpha=0$, then $\p\alpha=0$.
\end{cor}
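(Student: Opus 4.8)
The plan is to deduce Corollary \ref{Cor1} directly from Theorem \ref{thm1} by the standard argument that converts a $\b\p$-solvability statement into a $d$-closedness statement. Take $\alpha\in A^{0,0}(X,\Omega^p_X(\log D)\otimes L^{-1})$ with $\b\p\p\alpha=0$. Since $\alpha$ is a $(0,0)$-form, $\p\alpha\in A^{0,0}(X,\Omega^{p+1}_X(\log D)\otimes L^{-1})$ is again of type $(0,0)$, and $\b\p(\p\alpha)=-\p(\b\p\alpha)$; but $\b\p\alpha$ lies in $A^{0,1}(X,\Omega^p_X(\log D)\otimes L^{-1})$ while $\alpha$ being holomorphic-type is not automatic, so the input hypothesis $\b\p\p\alpha=0$ is exactly what we need and we keep it as given.

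First I would apply Theorem \ref{thm1} with $q=0$: the equation $\b\p x=\p\alpha$ has a solution $x\in A^{0,-1}(X,\Omega^{p+1}_X(\log D)\otimes L^{-1})$. Since there are no $(0,-1)$-forms, $x=0$, hence $\p\alpha=\b\p x=0$. That is the whole argument. To make it airtight I would note that Theorem \ref{thm1} is stated for arbitrary $q$, including $q=0$, in which case $A^{0,q-1}=A^{0,-1}=\{0\}$; the solution formula (\ref{2.2}), $x=\mc{I}^*\b\p^*\mb{G}(\mc{I}^*)^{-1}\p\alpha$, also visibly vanishes because $\b\p^*$ applied after landing in bidegree $(n,0)$ produces something in bidegree $(n,-1)=0$. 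Either way one concludes $\p\alpha=0$.

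Alternatively, and perhaps more transparently, one can argue using the harmonic projection directly rather than quoting Theorem \ref{thm1} as a black box: via the isomorphism (\ref{1.3})–(\ref{1.4}) one transports $(\sigma')^{-1}\pi^*\alpha$ to an element of $A^{n,0}(Y,T^{n-p}_Y(-\log D'))\hookrightarrow A^{p,0}(Y)$, applies the computation (\ref{harmonic}) showing the relevant harmonic projection vanishes, and then invokes the bundle-valued Hodge decomposition $\beta=\b\p\b\p^*\mb{G}\beta$ with $\beta=(\mc{I}^*)^{-1}\p\alpha\in A^{n,0}$; since $\b\p^*\beta$ would sit in bidegree $(n,-1)$ it is zero, forcing $\beta=0$ and hence $\p\alpha=0$.

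I do not expect any real obstacle here: the corollary is a formal consequence of Theorem \ref{thm1} in the edge case $q=0$, the only point worth stating explicitly being that the target space $A^{0,q-1}(X,\Omega^{p+1}_X(\log D)\otimes L^{-1})$ is the zero space when $q=0$, so a solution of $\b\p x=\p\alpha$ can only be $x=0$, whence $\p\alpha=0$. If anything needs care it is merely confirming that Theorem \ref{thm1}'s hypotheses are met with no extra assumption on $\alpha$ beyond $\b\p\p\alpha=0$ (automatic when $q=0$ is not quite right — it is genuinely used), but this is already part of the theorem's statement.
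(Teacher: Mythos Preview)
Your main argument is correct and is exactly the paper's proof: apply Theorem \ref{thm1} with $q=0$, observe that the solution space $A^{0,-1}(X,\Omega^{p+1}_X(\log D)\otimes L^{-1})=\{0\}$, hence $x=0$ and $\p\alpha=\b\p x=0$. The alternative harmonic-projection argument you sketch is also valid but unnecessary; the paper keeps only the one-line deduction from Theorem \ref{thm1}.
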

\begin{proof}
	By Theorem \ref{thm1}, there exists a solution $$x\in A^{0,-1}(X,\Omega^p_X(\log D)\otimes L^{-1})=\{0\}$$ such that $\b{\p}x=\p\alpha$ and then
	$$
	\p\alpha=\b{\p}x=0
	$$
since $x=0$.
\end{proof}

\subsection{An injectivity theorem}

In this subsection, we will prove an injectivity theorem by using Theorem \ref{thm1}. 

From Proposition \ref{prop1}, $d$ is an integrable logarithmic connection along $D$ on $L^{-1}$, and 
$$(\Omega^{\bullet}_X(\log D)\otimes L^{-1}, d)$$
is a logarithmic de Rham complex of $(L^{-1}, d)$. Let 
$$H^{k}(X,\Omega^{\bullet}_X(\log D)\otimes L^{-1})$$ denote the cohomology of the complex of sections $\left(\Gamma(X,\Omega^{\bullet}_X(\log D)\otimes L^{-1}),d\right)$ of $\Omega^{\bullet}_X(\log D)\otimes L^{-1}$. 
\begin{prop}\label{Cor2}
	The following mapping is injective:
	\begin{align*}
	\begin{CD}
		\iota: H^q(X,\Omega^n_X(\log D)\otimes L^{-1})\to H^{q+n}(X,\Omega^{\bullet}_X(\log D)\otimes L^{-1}),\quad [\alpha]\mapsto \iota([\alpha]):=[\alpha]_d.
	\end{CD}
	\end{align*}
\end{prop}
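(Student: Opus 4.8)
The plan is to reduce this injectivity statement to the solvability of the logarithmic $\b{\p}$-equation provided by Theorem \ref{thm1}, following the template of \cite[Corollary 3.6]{Wan}. Suppose $\alpha\in A^{0,q}(X,\Omega^n_X(\log D)\otimes L^{-1})$ represents a class in $H^q(X,\Omega^n_X(\log D)\otimes L^{-1})$, so $\b{\p}\alpha=0$, and suppose $\iota([\alpha])=0$, i.e. the $d$-closed form $\alpha$ (note $\p\alpha\in A^{0,q}(X,\Omega^{n+1}_X(\log D)\otimes L^{-1})=\{0\}$ automatically, since there are no $(n+1)$-forms, so $\alpha$ is indeed $d$-closed as a total-degree $(n+q)$-form) is $d$-exact in the complex computing $H^{q+n}(X,\Omega^{\bullet}_X(\log D)\otimes L^{-1})$. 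I need to conclude that $[\alpha]=0$ in Dolbeault cohomology, i.e. $\alpha=\b{\p}\gamma$ for some $\gamma\in A^{0,q-1}(X,\Omega^n_X(\log D)\otimes L^{-1})$.

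First I would unwind what $d$-exactness means concretely: there is an element $\beta$ of total degree $n+q-1$ in $\bigoplus_{a+b=n+q-1}A^{0,b}(X,\Omega^a_X(\log D)\otimes L^{-1})$ with $d\beta=\alpha$, where $d=\p+\b{\p}$. Writing $\beta=\sum_{b}\beta^{n-q+1+b,\, ?}$ — more precisely $\beta=\sum_{j\geq 0}\beta_j$ with $\beta_j\in A^{0,q-1-j}(X,\Omega^{n+j}_X(\log D)\otimes L^{-1})$, which forces $\beta_j=0$ for $j\geq 1$ (no forms of holomorphic degree $>n$) — so actually $\beta\in A^{0,q-1}(X,\Omega^n_X(\log D)\otimes L^{-1})$ and the equation $d\beta=\alpha$ splits by bidegree into $\b{\p}\beta=\alpha$ and $\p\beta=0$. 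Wait — that would finish it immediately, so the subtlety must be that $\beta$ is allowed to have components in several total-degree-$(n+q-1)$ pieces, and the vanishing of forms of holomorphic degree exceeding $n$ is exactly what collapses the chain. Thus the key structural observation is that the "staircase" in the logarithmic de Rham complex terminates because $\Omega^a_X(\log D)=0$ for $a>n$; then reading $d\beta=\alpha$ off in bidegrees from the top gives directly $\b{\p}\beta=\alpha$, and I am done without even invoking Theorem \ref{thm1}.

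Since that seems too easy, I suspect the actual argument is meant to handle the case where one only knows $\iota([\alpha])=0$ through a $d$-primitive $\beta$ that is a genuine multi-component cochain and where one must iterate: start from the bottom bidegree piece, use Theorem \ref{thm1} (solvability of $\b{\p}x=\p\alpha'$) to correct $\beta$ by a $d$-exact term so as to kill its lowest holomorphic-degree component, and repeat, descending (or ascending) the staircase until only the $A^{0,q-1}(X,\Omega^n_X(\log D)\otimes L^{-1})$-component survives and satisfies $\b{\p}\beta=\alpha$. Concretely: if $\beta=\beta_0+\cdots$ with $\beta_0\in A^{0,?}(X,\Omega^{?}_X(\log D)\otimes L^{-1})$ the extreme component, then $d\beta=\alpha$ forces $\b{\p}\beta_0=0$ at the extreme bidegree; applying Theorem \ref{thm1} to $\beta_0$ (whose $\b{\p}\p\beta_0=0$) yields $x$ with $\b{\p}x=\p\beta_0$, and replacing $\beta$ by $\beta-dx$ removes $\beta_0$ without changing the cohomological content. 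After finitely many such steps $\beta$ becomes a single Dolbeault-type primitive of $\alpha$, giving $[\alpha]=0$.

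The main obstacle I anticipate is bookkeeping the bidegrees correctly so that at each stage the component to be eliminated is $\b{\p}$-closed and of the form $\p(\text{something})$, so that Theorem \ref{thm1} applies verbatim, and checking that subtracting $dx$ neither reintroduces already-cleared components nor alters the top component that must end up equal to $\alpha$ modulo $\b{\p}$; this is the standard "zig-zag" argument for $E_1$-type degeneration, and the only real content is that the logarithmic $\b{\p}$-equation is solvable, which is exactly Theorem \ref{thm1}. I would also remark that this is the $L^{-1}$-twisted analogue of \cite[Corollary 3.6]{Wan} and that, combined with Theorem \ref{thm2}, it is consistent with the abstract fact that $E_1$-degeneration of the hypercohomology spectral sequence implies such injectivity of the edge map $H^q(X,\Omega^n_X(\log D)\otimes L^{-1})\hookrightarrow \mb{H}^{q+n}$.
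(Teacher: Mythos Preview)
Your bidegree bookkeeping goes astray at the outset. When you decompose a $d$-primitive $\beta$ of total degree $n+q-1$, its components live in $A^{0,q-1+j}(X,\Omega^{n-j}_X(\log D)\otimes L^{-1})$ for $j\ge 0$: the holomorphic degree runs \emph{down} from $n$, not up. So your first ``too easy'' reduction fails, but not because $\beta$ might straddle several total degrees --- rather because you indexed in the wrong direction and missed the components $\beta_{n-1,q},\beta_{n-2,q+1},\dots$.

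Your recovery via a bottom-up zig-zag does not work as written. If $\beta_0$ is the lowest-holomorphic-degree component and $x$ solves $\b{\p}x=\p\beta_0$ from Theorem~\ref{thm1}, then $x$ has the \emph{same} total degree $n+q-1$ as $\beta$, so $dx$ has total degree $n+q$ and ``$\beta-dx$'' is not a cochain of degree $n+q-1$. More to the point, killing $\beta_0$ by adding $d\gamma$ with $\deg\gamma=n+q-2$ would require $\beta_0$ to be $\b{\p}$-exact (or a $\p$-equation to be solvable), and neither is at hand. So the iteration you sketch does not get off the ground.

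The paper's argument avoids all of this by looking only at the \emph{top} bidegree, and uses Theorem~\ref{thm1} exactly once. Comparing bidegree $(n,q)$ in $d\beta=\alpha$ gives
\[
\alpha=\p\beta_{n-1,q}+\b{\p}\beta_{n,q-1},
\]
and since $\b{\p}\p\beta_{n-1,q}=\b{\p}\alpha=0$, Theorem~\ref{thm1} produces $\gamma\in A^{0,q-1}(X,\Omega^{n}_X(\log D)\otimes L^{-1})$ with $\b{\p}\gamma=\p\beta_{n-1,q}$; hence $\alpha=\b{\p}(\gamma+\beta_{n,q-1})$ and $[\alpha]=0$. The components $\beta_{n-2,q+1},\beta_{n-3,q+2},\dots$ play no role whatsoever.
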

\begin{proof}
For any $[\alpha]\in H^q(X,\Omega^n_X(\log D)\otimes L^{-1})$, then $\b{\p}\alpha=0$. By considering the degree of $\alpha$, so 
$$d\alpha=\p\alpha+\b{\p}\alpha=\b{\p}\alpha=0.$$
	It follows that $\iota([\alpha])=[\alpha]_d\in  H^{q+n}(X,\Omega^{\bullet}_X(\log D)\otimes L^{-1})$. If $\iota([\alpha])=[\alpha]_d=0$, then there exists a logarithmic form $\beta\in A^{q+n-1}(X, \Omega^{\bullet}_X(\log D)\otimes L^{-1})$ such that $\alpha=d\beta$. Therefore, the components $\beta_{n-1,q}\in A^{0,q}(X,\Omega^{n-1}_X(\log D)\otimes L^{-1})$ and $\beta_{n,q-1}\in A^{0,q-1}(X,\Omega^{n}_X(\log D)\otimes L^{-1})$ of $\beta$ satisfying 
	\begin{align}\label{2.3}
	\alpha=\p\beta_{n-1,q}+\b{\p}\beta_{n,q-1}.	
	\end{align}
Notice that $\b{\p}\beta_{n-1,q}=\b{\p}\alpha=0$, by Theorem \ref{thm1}, there exists $\gamma\in A^{0,q-1}(X,\Omega^{n}_X(\log D)\otimes L^{-1})$ such that
\begin{align}\label{2.4}
\p\beta_{n-1,q}=\b{\p}\gamma. 	
\end{align}
Combining (\ref{2.3}) with (\ref{2.4}), one has 
$$\alpha=\b{\p}(\gamma+\beta_{n,q-1}),$$
which implies that $[\alpha]=0\in H^q(X,\Omega^n_X(\log D)\otimes L^{-1})$. So we get the injectivity of $\iota$. 
\end{proof}
\begin{rem}
For the case that $L$ is trivial and $D=\sum_{i=1}^rD_i$ is a simple normal crossing divisor, Proposition \ref{Cor2} was proved in \cite[Corollary 3.6]{Wan}. More precisely, we proved that the restriction homomorphism
$$H^q(X,\Omega^n_X(\log D))\to H^q(X-D,K_{X-D})$$
is injective, which was first proved in \cite[Theorem 2.1]{Ambro} by using algebraic method. 
\end{rem}

\subsection{Degeneration of spectral sequences}

In this subsection, as an application of Theorem \ref{thm1}, we will prove the following $E_1$-degeneration of spectral sequences. 
\begin{thm}\label{thm2}
The spectral sequence 
\begin{align}
E^{p,q}_1=H^{q}(X,\Omega^p_X(\log D)\otimes L^{-1})\Longrightarrow \mb{H}^{p+q}(X,\Omega^{\bullet}_X(\log D)\otimes L^{-1})	
\end{align}
associated to the logarithmic de Rham complex 
$$(\Omega^{\bullet}_X(\log D)\otimes L^{-1},d)$$
degenerates in $E_1$. Here $\mb{H}^{p+q}(X,\Omega^{\bullet}_X(\log D)\otimes L^{-1})$ denotes the hypercohomology.
\end{thm}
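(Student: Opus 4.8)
The plan is to deduce the $E_1$-degeneration from the solvability of the logarithmic $\b{\p}$-equation in Theorem \ref{thm1}, following the same strategy used in \cite[Section 4]{Wan} for the trivial coefficient case. First I would recall the logarithmic analogue of the Fr\"olicher-type description of the spectral sequence terms as in \cite[Theorems 1 and 3]{cfgu}: writing $A^{p,q}:=A^{0,q}(X,\Omega^p_X(\log D)\otimes L^{-1})$, the spectral sequence of the double complex $(A^{\bullet,\bullet},\p,\b{\p})$ has
\begin{equation*}
E_r^{p,q}\cong Z_r^{p,q}/B_r^{p,q},
\end{equation*}
where $Z_r^{p,q}$ consists of those $\alpha\in A^{p,q}$ admitting a ``$\b{\p}$-extension'' $\alpha=\alpha_0,\alpha_1,\dots,\alpha_{r-1}$ with $\b{\p}\alpha_0=0$ and $\p\alpha_{i-1}+\b{\p}\alpha_i=0$, and the differential $d_r\colon E_r^{p,q}\to E_r^{p+r,q-r+1}$ is induced by $\alpha\mapsto \p\alpha_{r-1}$. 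Since $E_1^{p,q}=H^q(X,\Omega^p_X(\log D)\otimes L^{-1})$ and the abutment is the hypercohomology $\mb{H}^{p+q}(X,\Omega^{\bullet}_X(\log D)\otimes L^{-1})$ (using that $\Omega^{\bullet}_X(\log D)\otimes L^{-1}$ with $d$ is a complex computing this hypercohomology via its Dolbeault resolution), it suffices to prove $d_r=0$ for all $r\geq 1$.

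Next I would carry out the key reduction: to show $d_r[\alpha]=0$ for a class represented by $\alpha\in Z_r^{p,q}$, it is enough to produce, from the chosen $\b{\p}$-extension, a modification so that the last term $\p\alpha_{r-1}$ becomes $\b{\p}$-exact, hence zero in $E_r^{p+r,q-r+1}$. The mechanism is an induction on the length of the extension. Given $\alpha_0$ with $\b{\p}\alpha_0=0$, Corollary \ref{Cor1} (the closedness statement, for $q=0$) or more precisely Theorem \ref{thm1} applied repeatedly lets me replace each successive $\alpha_i$: since $\b{\p}\p\alpha_0=\p\b{\p}\alpha_0=0$, Theorem \ref{thm1} gives $x_1\in A^{p+1,q-1}$ with $\b{\p}x_1=\p\alpha_0$, so $-x_1$ is a legitimate choice for $\alpha_1$; then $\b{\p}\p\alpha_1=0$ again and Theorem \ref{thm1} supplies $\alpha_2$, and so on. After $r$ steps one reaches a choice with $\p\alpha_{r-1}=\b{\p}(\text{something})$, which is exactly what is needed for $d_r[\alpha]=0$. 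I would also need to check the standard compatibility: different $\b{\p}$-extensions of the same $\alpha$ yield the same class in $E_r$, so that vanishing on one representative of the extension suffices — this is part of the $cfgu$-type formalism and is routine.

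Then I would assemble the pieces: $E_1$ has all differentials $d_1 = 0$, hence $E_2 = E_1$; inductively $d_r = 0$ forces $E_{r+1}=E_r$, and since the complex is bounded (degrees run in $0\leq p\leq n$, $0\leq q\leq n$) the spectral sequence stabilizes, giving $E_1^{p,q}=E_\infty^{p,q}$. Combined with the convergence to the hypercohomology this yields the claimed $E_1$-degeneration, and as a numerical corollary $\dim_{\mb{C}}\mb{H}^{k}(X,\Omega^{\bullet}_X(\log D)\otimes L^{-1})=\sum_{p+q=k}\dim_{\mb{C}}H^q(X,\Omega^p_X(\log D)\otimes L^{-1})$.

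The main obstacle is not the homological bookkeeping but making sure Theorem \ref{thm1} can be invoked at every stage of the induction with the correct bidegrees: each $\alpha_i$ lives in $A^{0,q-i}(X,\Omega^{p+i}_X(\log D)\otimes L^{-1})$, the hypothesis $\b{\p}\p\alpha_i=0$ must be verified (it follows from $\p\alpha_{i-1}+\b{\p}\alpha_i=0$ together with $\b{\p}^2=0$ and $\b{\p}\p=-\p\b{\p}$), and one must confirm that the solution $x$ produced by Theorem \ref{thm1} is again a logarithmic form valued in $L^{-1}$ of the right type so that the next step applies — all of which is exactly guaranteed by the statement of Theorem \ref{thm1}. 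A secondary point to state carefully is the identification of the abutment with the hypercohomology of the logarithmic de Rham complex and the precise form of $Z_r^{p,q}$, $B_r^{p,q}$ referenced in \eqref{zr} and \eqref{br}; once those are in place the degeneration is immediate.
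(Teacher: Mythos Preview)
Your proposal is correct and follows essentially the same approach as the paper's own proof: both invoke the logarithmic analogue of the Fr\"olicher description from \cite{cfgu}, identify $d_r[\alpha_{p,q}]=[\p\alpha_{p+r-1,q-r+1}]$, and then kill every $d_r$ by a direct application of Theorem~\ref{thm1}. The paper compresses the argument into a single sentence (``a direct and exact application of Theorem \ref{thm1} implies $d_i=0$''), whereas you spell out the inductive construction of the $\b{\p}$-extension and the verification that $\p\alpha_{r-1}$ lands in $B_r$; this extra detail is sound and does not change the route.
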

 \begin{proof}
The proof needs a logarithmic analogue of the general description on the terms in the Fr\"olicher spectral sequence as in \cite[Theorems 1 and 3]{cfgu}.
By Dolbeault isomorphism theorem, one has
$$
H^q(X,\Omega^p_X(\log D)\otimes L^{-1})\cong H^{0,q}_{\b{\p}}(X,\Omega^p_X(\log D)\otimes L^{-1}):=\frac{\text{Ker}(\b{\p})\cap A^{0,q}(X,\Omega^p_X(\log D)\otimes L^{-1})}{\b{\p}A^{0,q-1}(X,\Omega^p_X(\log D)\otimes L^{-1})}.
$$
By the definition of spectral sequences
 $$E_r^{p,q}\cong Z_r^{p,q}/B_r^{p,q},$$
where $Z_r^{p,q}$ lies between the $\b{\p}$-closed and $d$-closed logarithmic $(p,q)$-forms and $B_r^{p,q}$ lies between the $\b{\p}$-exact and $d$-exact logarithmic $(p,q)$-forms
in some senses.
Actually,
$$Z_1^{p,q}=\{\alpha\in A^{0,q}(X,\Omega^p_X(\log D)\otimes L^{-1})\ |\ \b{\p}\alpha=0\},$$
$$B_1^{p,q}=\{\alpha\in A^{0,q}(X,\Omega^p_X(\log D)\otimes L^{-1})\ |\ \alpha=\b{\p}\beta, \beta\in A^{0,q-1}(X,\Omega^p_X(\log D)\otimes L^{-1})\}.$$
For $r\geq 2$,
\begin{equation}\label{zr}
\begin{aligned}
Z_r^{p,q}=\{\alpha_{p,q}\in A^{0,q}(X,\Omega^p_X(\log D)\otimes L^{-1})\ |
 &\ \b{\p}\alpha_{p,q}=0,\ \text{and there exist }\\
 &\text{$\alpha_{p+i,q-i}\in A^{0,q-i}(X,\Omega^{p+i}_X(\log D)\otimes L^{-1})$ }\\
  &\text{such that $\p\alpha_{p+i-1,q-i+1}+\b\p\alpha_{p+i,q-i}=0, 1\leq i\leq r-1$}
 \},
\end{aligned}
\end{equation}
\begin{align}\label{br}
\begin{split}
B_r^{p,q}=\{&\p\beta_{p-1,q}+\b\p\beta_{p,q-1}\in A^{0,q}(X,\Omega^p_X(\log D)\otimes L^{-1})\ |
 \ \text{there exist }\\
 &\text{ $\beta_{p-i,q+i-1}\in A^{0,q+i-1}(X,\Omega^{p-i}_X(\log D)\otimes L^{-1})$, $2\leq i\leq r-1, $}\\
 &\text{ such that $\p\beta_{p-i,q+i-1}+\b\p\beta_{p-i+1,q+i-2}=0, \b\p\beta_{p-r+1,q+r-2}=0$}\},
 \end{split}
\end{align}
and the map $d_r: E_r^{p,q}\longrightarrow E_r^{p+r,q-r+1}$ is given by
$$d_r[\alpha_{p,q}]=[\p\alpha_{p+r-1,q-r+1}],$$
where $[\alpha_{p,q}]\in E_r^{p,q}$ and $\alpha_{p+r-1,q-r+1}$ appears in \eqref{zr}.  Hence, a direct and exact application of Theorem \ref{thm1} implies
$$d_i=0,\ \forall i\geq 1,$$
which is indeed the desired degeneration.
\end{proof}
\begin{rem}
For the case that $X$ is projective and $D$ is simple normal crossing divisor, the above theorem is proved in \cite[Theorem 3.2 (b)]{Viehweg}.	
\end{rem}

\subsection{Logarithmic deformation}

In this subsection, we will discuss the logarithmic deformation by using an iterative method originally from \cite{T87, To89, LSY} and developed in \cite{Liu, RZ, RZ2, RZ15, RwZ, RwZ1, Wan, Liu1}. 

For the definition of logarithmic definition, one can refer to \cite[Definition 3]{kawa}. Let $T_X(-\log D)$ be the dual sheaf of $\Omega^1_X(\log D)$. Then the set of infinitesimal logarithmic deformations is the space $H^1(X, T_X(-\log D))$. Moreover, as shown in \cite[Page 251]{kawa}, the semi-universal family \cite[Definition 5]{kawa} can be obtain from a subspace of 
$$\Gamma_{\text{real analytic}}(X, T_X(-\log D)\otimes \Lambda^{0,1}T^*X),$$ which, usually called the space of \emph{Beltrami differentials},  consists of sections  satisfying the integrability  condition:
\begin{align}\label{integrable}
\b{\p}\varphi=\frac{1}{2}[\varphi,\varphi].
\end{align}

Suppose that $D$ is a smooth divisor with $D\in |-2K_X|$, and for any $[\varphi_1]\in H^{0,1}(X, T_X(-\log D))$ and any $t$ in a small $\epsilon$-disk $\Delta_\epsilon$ of $0$ in $\mathbb{C}^{\dim_{\mathbb{C}} H^{0,1}(X, T_X(-\log D))}$, we try to construct a holomorphic family
$$\varphi:=\varphi(t)\in A^{0,1}(X, T_X(-\log D))$$ satisfying the following integrability and initial conditions:
\begin{align}\label{log 0.1}
\b{\p}\varphi=\frac{1}{2}[\varphi,\varphi],\quad \frac{\p \varphi}{\p t}(0)=\varphi_1.	
\end{align}
To solve the above equation, we need the following lemma, and we will omit its proof because it is the same as to \cite[Lemma 4.9]{Wan}. 
\begin{lemma}\label{log lemma2}
	Let $\Omega'\in A^{0,0}(X,\Omega^n_X(\log D)\otimes K_X)$ be a logarithmic $(n,0)$-form without zero points. Then
	$$
	\bullet\l\Omega': A^{0,1}(X,T_X(-\log D))\to A^{0,1}(X, \Omega^{n-1}_X(\log D)\otimes K_X)	
	$$
is an isomorphism, whose inverse we denote by
$$
\Omega'^*\l\bullet: A^{0,1}(X, \Omega^{n-1}_X(\log D)\otimes K_X)\to A^{0,1}(X, T_X(-\log D)).
$$
\end{lemma}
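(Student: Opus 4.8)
\textbf{Proof proposal for Lemma \ref{log lemma2}.}

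The plan is to reduce the statement to the standard fact that contraction with a nowhere-vanishing holomorphic volume form identifies vector fields with $(n-1,0)$-forms, and then check that this identification respects the logarithmic structure along $D$. First I would work locally in a logarithmic coordinate system $(U,\{z^1,\dots,z^n\})$ with $D\cap U=\{z^1=0\}$ (recall $D$ is smooth, so only one coordinate hyperplane is involved). A local frame of $T_X(-\log D)$ is given by $z^1\frac{\p}{\p z^1},\frac{\p}{\p z^2},\dots,\frac{\p}{\p z^n}$, while a local frame of $\Omega^{n-1}_X(\log D)$ is obtained by deleting one factor from $\frac{dz^1}{z^1}\wedge dz^2\wedge\cdots\wedge dz^n$. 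Writing $\Omega'$ in this frame, $\Omega'=g\,\frac{dz^1}{z^1}\wedge dz^2\wedge\cdots\wedge dz^n\otimes(\text{frame of }K_X)$ with $g$ nowhere zero (since $\Omega'$ has no zeros, using that $\Omega^n_X(\log D)\otimes K_X$ is a line bundle—note $\Omega^n_X(\log D)=K_X(D)$), one computes that $\left(z^1\tfrac{\p}{\p z^1}\right)\l\Omega'$ and $\left(\tfrac{\p}{\p z^j}\right)\l\Omega'$, $j\geq 2$, form a local frame of $\Omega^{n-1}_X(\log D)\otimes K_X$. Hence $\bullet\l\Omega'$ is a bundle isomorphism $T_X(-\log D)\xrightarrow{\sim}\Omega^{n-1}_X(\log D)\otimes K_X$, which is the content one needs pointwise.

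Having the bundle isomorphism, the induced map on $A^{0,1}$-spaces is obtained by tensoring with $\Lambda^{0,1}T^*X$ and is therefore an isomorphism of the corresponding spaces of smooth forms; its inverse is the map induced by the inverse bundle isomorphism, which I denote $\Omega'^*\l\bullet$. Concretely, on a decomposable element $v\otimes\bar\eta$ with $v$ a section of $T_X(-\log D)$ and $\bar\eta\in A^{0,1}$, one sets $(\bullet\l\Omega')(v\otimes\bar\eta)=(v\l\Omega')\otimes\bar\eta$, and the local computation above shows this is invertible with smooth inverse. The key point that makes everything work is that $\Omega'$ is holomorphic and nowhere vanishing as a section of the \emph{logarithmic} line bundle $\Omega^n_X(\log D)\otimes K_X$, so contraction neither creates nor destroys poles along $D$: a logarithmic pole in $\Omega'$ exactly matches the logarithmic twist in $T_X(-\log D)$.

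The only mildly delicate step is verifying that the pairing stays within the logarithmic sheaves—i.e., that contracting $z^1\frac{\p}{\p z^1}$ against the logarithmic volume form lands in $\Omega^{n-1}_X(\log D)$ rather than acquiring a genuine pole, and conversely that contracting $\frac{\p}{\p z^j}$ for $j\ge 2$ still retains the factor $\frac{dz^1}{z^1}$. This is a direct check in the local frame. Since the argument is identical in structure to \cite[Lemma 4.9]{Wan} (there $\Omega'$ is the analogous nowhere-zero logarithmic form and the roles of $T_X(-\log D)$ and $\Omega^{n-1}_X(\log D)$ are the same), I would simply refer to that proof rather than repeat the coordinate bookkeeping, noting only that here the twist by the extra line bundle $K_X$ is along for the ride and does not affect the argument. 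I do not anticipate a genuine obstacle; the statement is essentially linear algebra fibrewise plus the observation that the logarithmic structure is compatible with contraction against a nowhere-vanishing section of $K_X(D)\otimes K_X$.
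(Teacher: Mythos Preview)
Your proposal is correct and matches the paper's treatment: the paper omits the proof entirely, remarking only that it is the same as \cite[Lemma 4.9]{Wan}, which is exactly the reference you invoke. Your additional local computation in a logarithmic coordinate chart is the standard verification behind that citation and contains no gaps; the extra twist by $K_X$ is, as you note, inert.
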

 By assumption, $D\in |-2K_X|$, so $\Omega^n_X(\log D)\otimes K_X\cong \mc{O}_X(D+2K_X)$ is trivial, one may take an element 
 \begin{align}
 \Omega\in H^0(X, \Omega^n_X(\log D)\otimes K_X)	
 \end{align}
  without zero points. 
\begin{prop}\label{prop3}
If there are two smooth families $$\varphi(t)\in A^{0,1}(X, T_X(-\log D))$$ and $$\Omega(t)\in A^{0,0}(X,\Omega^{n}_X(\log D)\otimes K_X)$$ satisfying the system of equations
\begin{equation}\label{log 0.2}
\begin{cases}
 (\b{\p}+\frac{1}{2}\p\circ i_{\varphi})(i_{\varphi}\Omega(t))=0,\\
(\b{\p}+\p\circ i_{\varphi})\Omega(t)=0,\\
\Omega_0=\Omega,
\end{cases}
\end{equation}
then $\varphi(t)$ satisfies (\ref{integrable})  for sufficiently small $t$.
\end{prop}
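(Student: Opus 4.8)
The plan is to show that the system \eqref{log 0.2} forces $\varphi(t)$ to satisfy the Maurer--Cartan equation \eqref{integrable}. The key device is the isomorphism $\bullet\lrcorner\,\Omega(t)$ of Lemma \ref{log lemma2}, which transports the integrability equation for a Beltrami differential into a statement about logarithmic $(n-1,1)$-forms where the contraction formalism is easier to manipulate. First I would record the two standard commutation identities relating $\bar\partial$, $\partial$ and the contraction operator $i_\varphi$: namely, for a logarithmic $(n,0)$-form $\Omega(t)$ one has the ``Tian--Todorov'' type formula
\begin{align*}
i_{[\varphi,\varphi]}\Omega(t) = -2\,\partial\bigl(i_\varphi i_\varphi \Omega(t)\bigr) + 2\,i_\varphi\,\partial\bigl(i_\varphi\Omega(t)\bigr) - i_\varphi i_\varphi\,\partial\Omega(t),
\end{align*}
together with $\bar\partial(i_\varphi\Omega(t)) = i_{\bar\partial\varphi}\Omega(t) - i_\varphi\bar\partial\Omega(t)$ and its second-order analogue for $i_\varphi i_\varphi\Omega(t)$. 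These hold on the logarithmic complex exactly as in the classical case because the contraction $T_X(-\log D)\otimes \Omega^p_X(\log D)\to \Omega^{p-1}_X(\log D)$ is well-defined and compatible with $\partial$, $\bar\partial$ in logarithmic coordinates.

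Next I would combine the two equations in \eqref{log 0.2}. From the second equation, $\bar\partial\Omega(t) = -\partial(i_\varphi\Omega(t))$, so $\partial\Omega(t)$ (which is an $(n+1,0)$-logarithmic form, hence zero) gives $\partial(i_\varphi\Omega(t))$ is $\bar\partial$-like; more to the point, applying the commutation identities one expresses $i_{\bar\partial\varphi - \frac12[\varphi,\varphi]}\Omega(t)$ as a combination of $\partial$- and $\bar\partial$-derivatives of $i_\varphi\Omega(t)$ and $i_\varphi i_\varphi\Omega(t)$ that is annihilated precisely by the first equation of \eqref{log 0.2}. Concretely, I expect to arrive at
\begin{align*}
i_{\bar\partial\varphi - \frac12[\varphi,\varphi]}\,\Omega(t) = \Bigl(\bar\partial + \tfrac12\partial\circ i_\varphi\Bigr)\bigl(i_\varphi\Omega(t)\bigr) + (\text{terms that vanish by the second equation and degree reasons}),
\end{align*}
so the first equation of \eqref{log 0.2} yields $i_{\bar\partial\varphi - \frac12[\varphi,\varphi]}\Omega(t) = 0$. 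Since $\Omega(t)$ is a smooth deformation of $\Omega$, which has no zeros, $\Omega(t)$ also has no zeros for $t$ small, so Lemma \ref{log lemma2} applies with $\Omega'=\Omega(t)$ and the contraction $\bullet\lrcorner\,\Omega(t)$ is an isomorphism; therefore $\bar\partial\varphi - \frac12[\varphi,\varphi] = 0$, which is \eqref{integrable}.

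The main obstacle I anticipate is bookkeeping of the Tian--Todorov identities in the \emph{logarithmic} and \emph{twisted} setting: one must check that contracting a $T_X(-\log D)$-valued form against an element of $A^{0,0}(X,\Omega^n_X(\log D)\otimes K_X)$ behaves well with respect to $\partial$ coming from the logarithmic connection on $K_X$ (as constructed in Proposition \ref{prop1}), and that no boundary contributions along $D$ appear. The verification is local in a logarithmic coordinate system $(U,\{z^1,\dots,z^n\})$ with $D\cap U=\{z^1=0\}$, where $\Omega(t)$ looks like $g(t)\,\frac{dz^1}{z^1}\wedge dz^2\wedge\cdots\wedge dz^n\otimes(\text{frame of }K_X)$ with $g(t)$ smooth and nonvanishing, and a Beltrami differential $\varphi = \varphi^i_{\bar j}\,\frac{\partial}{\partial z^i}\otimes d\bar z^j$ with the constraint (from $T_X(-\log D)$) that $\varphi^1_{\bar j}$ vanishes on $D$; these constraints are exactly what makes the contractions land back in the logarithmic complex, so the classical computation goes through verbatim. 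Once the identities are in place the argument is the short two-equation manipulation above, and the nonvanishing of $\Omega$ supplied before the statement is what lets us invoke Lemma \ref{log lemma2} to conclude.
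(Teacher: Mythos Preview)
Your proposal is correct and follows essentially the same route as the paper: use the Tian--Todorov contraction identity together with the commutation $\bar\partial(i_\varphi\Omega(t))=i_{\bar\partial\varphi}\Omega(t)+i_\varphi\bar\partial\Omega(t)$ to combine the two equations of \eqref{log 0.2} into $(\bar\partial\varphi-\tfrac12[\varphi,\varphi])\lrcorner\Omega(t)=0$, then invoke Lemma~\ref{log lemma2} for small $t$. Just watch the coefficients---your stated Tian--Todorov formula has $-2\,\partial(i_\varphi i_\varphi\Omega)$ where the standard version (and the one the paper uses implicitly) has $-\partial(i_\varphi i_\varphi\Omega)$, and your sign in the $\bar\partial$-commutation is opposite to the paper's; these are easy to reconcile once you fix conventions, and the paper's short computation in \eqref{log 0.3} shows how the pieces fit together in three lines.
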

\begin{proof}
From (\ref{log 0.2}), one has
\begin{align}\label{log 0.3}
\begin{split}
\b{\p}(\varphi\l\Omega(t))&=-\frac{1}{2}\p\circ i_{\varphi}\circ i_{\varphi}\Omega(t)\\
&=\frac{1}{2}[\varphi,\varphi]\l\Omega(t)-i_{\varphi}\circ\p\circ i_{\varphi}\Omega(t)\\
&=\frac{1}{2}[\varphi,\varphi]\l\Omega(t)+i_{\varphi}\circ\b{\p}\Omega(t).
	\end{split}
\end{align}
Therefore,
$$
(\b{\p}\varphi)\l\Omega(t)=\b{\p}(\varphi\l\Omega(t))-i_{\varphi}\circ\b{\p}\Omega(t)=\frac{1}{2}[\varphi,\varphi]\l\Omega(t).
$$
Since $\Omega(t)$ is smooth and $\Omega(0)=\Omega_0=\Omega$, $\Omega(t)\in A^{0,0}(X, \Omega^n_X(\log D)\otimes K_X)$ also has no zero point for small $t$. One has 
$$
\b{\p}\varphi=\frac{1}{2}[\varphi,\varphi].	
$$
\end{proof}

By the above Proposition, our goal is to construct two smooth families $\varphi(t)$ and $\Omega(t)$ satisfy (\ref{log 0.2}) and $(\p\varphi(t)/\p t)|_{t=0}=\varphi_1$. By taking $L=K_X^{-1}$ and using Theorem \ref{thm1}, and the same argument as \cite[Page 34-38]{Wan}, one  can solve the system of equations (\ref{log 0.2}) and  obtain
\begin{thm}\label{thm4}
Let $X$ be a compact K\"ahler manifold and $D$ a smooth divisor such that 
$D\in |-2K_X|$. Then, the pair $(X,D)$ has unobstructed deformations. More precisely, for any  $[\varphi_1]\in H^{0,1}(X, T_X(-\log D))$, there is a holomorphic family $$\varphi(t)\in A^{0,1}(X, T_X(-\log D)),$$  satisfying (\ref{log 0.1}). 
\end{thm}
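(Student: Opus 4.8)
\textbf{Proof proposal for Theorem \ref{thm4}.}

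The plan is to follow the iterative Kuranishi-type construction used in \cite{T87, To89, Wan}, but now carried out in the logarithmic de Rham complex $(\Omega^{\bullet}_X(\log D)\otimes K_X^{-1}, d)$ twisted by $L=K_X^{-1}$, which satisfies $L^2=\mc{O}_X(D)$ precisely because $D\in|-2K_X|$. First I would fix a nowhere-vanishing $\Omega\in H^0(X,\Omega^n_X(\log D)\otimes K_X)$ (which exists since $\Omega^n_X(\log D)\otimes K_X\cong\mc{O}_X(D+2K_X)\cong\mc{O}_X$) and use Lemma \ref{log lemma2} to transport the unknown Beltrami differential $\varphi(t)\in A^{0,1}(X,T_X(-\log D))$ to a logarithmic form $\varphi(t)\l\Omega(t)\in A^{0,1}(X,\Omega^{n-1}_X(\log D)\otimes K_X)$. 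By Proposition \ref{prop3}, it suffices to produce smooth families $\varphi(t)$ and $\Omega(t)$ solving the system \eqref{log 0.2} with $\Omega_0=\Omega$ and $(\p\varphi/\p t)(0)=\varphi_1$; the point of passing to the twisted logarithmic complex is that the operators $\b{\p}+\p\circ i_\varphi$ appearing there are governed by the logarithmic $\b{\p}$-equation of Theorem \ref{thm1} with $L^{-1}=K_X$, so the relevant obstruction space vanishes.

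Next I would set up the formal power series recursion. Write $\varphi(t)=\sum_{|I|\geq 1}\varphi_I t^I$ and $\Omega(t)=\sum_{|I|\geq 0}\Omega_I t^I$ with $\Omega_0=\Omega$, and expand \eqref{log 0.2} order by order in $t$. At each multi-index order $I$ the two equations take the schematic form $\b{\p}(\text{new term})=(\text{polynomial in lower-order terms})$, where the right-hand side is $\b{\p}$-closed by the induction hypothesis and, crucially, is $\p$-exact up to $\b{\p}$-exact pieces. Here is where Theorem \ref{thm1} enters: given that the right-hand side has the shape $\p(\text{lower-order logarithmic form})$ with the logarithmic form lying in the appropriate $A^{0,q}(X,\Omega^p_X(\log D)\otimes K_X)$ and satisfying $\b{\p}\p(\cdot)=0$, Theorem \ref{thm1} furnishes a solution, and to fix the gauge uniquely one uses the Green's operator solution $x=\mc{I}^*\b{\p}^*\mb{G}(\mc{I}^*)^{-1}\p\alpha$ from \eqref{2.2} together with the $\b{\p}^*$-normalization $\b{\p}^*\varphi_I=0$, exactly as in \cite[Page 34--38]{Wan}. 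This produces a unique formal solution with the prescribed initial term $\varphi_1$.

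Then I would address convergence. Following the standard a priori estimate argument of Todorov and Tian, one introduces the Hölder norms $\|\cdot\|_{k,\alpha}$ and the majorant power series $A(t)=\frac{b}{16c}\sum_{k\geq 1}\frac{(c\sum t_i)^k}{k^2}$, and shows by induction that $\varphi(t)\ll A(t)$ coefficientwise; the quadratic nature of the nonlinearity $[\varphi,\varphi]$ and $\p\circ i_\varphi$, combined with the boundedness of $\b{\p}^*\mb{G}$ and of the isomorphisms $\mc{I}^*,(\mc{I}^*)^{-1}$, $\bullet\l\Omega$, $\Omega^*\l\bullet$ on the relevant Hölder spaces, gives the required estimate. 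This forces a positive radius of convergence for $\varphi(t)$ and $\Omega(t)$, and then $\Omega(t)$ stays nowhere vanishing for small $t$ by continuity, so Proposition \ref{prop3} applies and yields $\b{\p}\varphi=\frac12[\varphi,\varphi]$ with $(\p\varphi/\p t)(0)=\varphi_1$.

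The main obstacle is verifying at each inductive step that the right-hand side of the order-$I$ equation genuinely falls into the hypotheses of Theorem \ref{thm1} — that is, that it can be written as $\p\alpha$ for a logarithmic form $\alpha$ in the correct twisted bundle with $\b{\p}\p\alpha=0$ — and that the gauge-fixing via $\b{\p}^*$ is compatible with the $i_\varphi$-contraction structure so that the two equations in \eqref{log 0.2} remain consistent throughout the recursion. Since this bookkeeping is essentially identical to the torsion-free case treated in \cite[Section 4]{Wan} once the input equation Theorem \ref{thm1} is in hand, I would treat it by citing that argument rather than reproducing it, and merely indicate the substitution $L=K_X^{-1}$ and the role of $D\in|-2K_X|$ in making $\Omega^n_X(\log D)\otimes K_X$ trivial.
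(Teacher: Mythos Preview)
Your proposal is correct and matches the paper's own argument essentially line for line: the paper also sets $L=K_X^{-1}$ so that $L^2=\mc{O}_X(D)$, invokes the nowhere-vanishing $\Omega$, reduces via Proposition \ref{prop3} to solving \eqref{log 0.2}, and then applies Theorem \ref{thm1} together with the iterative construction of \cite[Page 34--38]{Wan}. In fact the paper's proof is even terser than yours---it simply cites that reference for the recursion and convergence---so your expanded sketch of the power-series recursion, the gauge-fixing via \eqref{2.2}, and the majorant-series convergence is a faithful unpacking of exactly what the paper intends.
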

\begin{rem}
For the case that $X$ is projective and $D$ is a smooth divisor with $D\in |-NK_X|$ for some positive integer $N$, $(X, D)$ was also proved to has unobstructed deformations \cite{Iacono}. More general, if $X$ is projective, $D=\sum_{i=1}^r D_i$ is a simple normal crossing divisor, and there is a collection of weights $\{a_i\}_{i\in I}\subset [0,1]\cap\mb{Q}$ so that 
$$\sum_{i\in I}a_i[D_i]=-K_X\in \text{Pic}(X)\otimes \mb{Q},$$
then the pair $(X,D)$ has also unobstructed deformations \cite[Section 4.3.3 (iii)]{KKP08}. 
\end{rem}
\section{Further discussions}\label{sec5}

Let $X$ be a smooth projective variety. The injective theorems, vanishing theorems, and unobstructed deformations have been studied widely by using the $E_1$-degeneration of some spectral sequences \cite{Viehweg, Fujino, Iacono}. In particular, by \cite[Theorem 3.2 (b)]{Viehweg}, the following spectral sequence 
\begin{align}\label{5.1}
E_1^{ab}=H^b(X,\Omega^a_X(\log D)\otimes L^{-1})\Longrightarrow	\mb{H}^{a+b}(X, \Omega^{\bullet}_X(\log D)\otimes L^{-1})
\end{align}
associated to the logarithmic de Rham complex 
$$(\Omega^{\bullet}_X(\log D)\otimes L^{-1}, \n^{(i)}_{\bullet})$$
degenerates in $E_1$. Here $L$ is a holomorphic line bundle over $X$ satisfying $L^N=\mc{O}_X(\sum_{i=1}^r a_i D_i)$, $0<a_j<N$. For the proof of (\ref{5.1}), one can first take a cyclic cover obtained by taking the $N$-th root out of a canonical section of $L^N$ and get a normal variety $Y$,  then by using a Kawamata's result \cite[Lemma 3.19]{Viehweg} (which needs an ample invertible sheaf), one can get a projective manifold $T$ and a finite morphism $\delta:T\to Y$, so that the degeneration (\ref{5.1}) can be reduced to the more familiar degeneration of the Hodge spectral sequence 
$$E_1^{ab}=H^b(T,\Omega^a_T)\Longrightarrow \mb{H}^{a+b}(T,\Omega^{\bullet}_T). $$ 
On the other hand, if $X$ is a smooth projective complex variety, $D=\sum_{i=1}^r D_i$ is a simple normal crossing divisor, and there is a collection of weights $\{a_i\}_{i\in I}\subset [0,1]\cap\mb{Q}$ so that 
\begin{align}\label{5.2}
	\sum_{i\in I}a_i[D_i]=-K_X\in \text{Pic}(X)\otimes \mb{Q},
\end{align}
then the pair $(X,D)$ has also unobstructed  deformations by using a purely algebraic method \cite[Section 4.3.3 (iii)]{KKP08}.  More precisely, they used Dolbeault type complexes to construct a differential Batalin-Vilkovisky algebra such that the associated differential graded Lie algebra (DGLA) controls the deformation problem. If the differential Batalin-Vilkovisky algebra has a degeneration property then the associated DGLA is homotopy abelian.

In terms of K\"ahler geometry, K. Liu, S. Rao and the author \cite{Wan} developed a method so that the $E_1$-degeneration of spectral sequences and the unobstructed deformations can be reduced to a logarithmic $\b{\p}$-equation. From the discussions in Section \ref{sec3}, if one can solve (\ref{dbar3}) in the case of $L^N=\mc{O}_X(\sum_{i=1}^ra_i D_i)$, $0\leq a_i\leq N$, then by the same argument as in Section 3, one can immediately obtain the degeneration (\ref{5.1}) and  give an analytic proof for the unobstructed  deformations in the case of (\ref{5.2}). Inspired by the above discussions, one may naturally  conjecture that:
\begin{con}\label{con5.1}
	Let $X$ be a compact K\"ahler manifold and $D=\sum_{i=1}^rD_i$ a simple normal crossing divisor in $X$. If $L$ is a holomorphic bundle over $X$ with $L^N=\mc{O}_X(\sum_{i=1}^r a_i D_i)$, $0\leq a_i\leq N$ and $a_i\in \mb{Z}$, then the following logarithmic $\b{\p}$-equation:
\begin{align}\label{dbar4}\b{\p}x=\p\alpha\end{align}
has a solution $x\in A^{0,q-1}(X,\Omega^{p+1}_X(\log D)\otimes L^{-1})$ for any $\alpha\in A^{0,q}(X,\Omega^p_X(\log D)\otimes L^{-1})$ with $\b{\p}\p\alpha=0$.
\end{con}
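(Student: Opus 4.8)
The plan is to imitate the cyclic covering strategy of Theorem \ref{thm1}, but now the cover is branched along the divisor $\sum_{i=1}^r a_i D_i$ rather than along a smooth divisor, so the cover $Y=\o{X}[\sqrt[N]{s}]$ will generally be singular along the preimage of $\mathrm{Sing}(\sum_i a_i D_i)$. First I would take $s$ to be the canonical section of $\mc{O}_X(\sum_i a_i D_i)$ and form the normalization $\o\pi:Y\to X$, recording that $\o\pi_*\mc{O}_Y=\bigoplus_{i=0}^{N-1}L^{-i}(\lfloor \tfrac{i}{N}\sum_j a_j D_j\rfloor)$ as in Subsection \ref{sub1.3}. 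In logarithmic coordinates where $D\cap U=\{z^1\cdots z^k=0\}$, the local picture of the cover is $v^N=\prod_{i=1}^k (z^i)^{a_i}$, which near a deep stratum is a genuine quotient singularity. The key structural fact I would need is the analogue of Proposition \ref{prop2}(v), namely that pulling back logarithmic forms and then dividing by the appropriate canonical section identifies
\begin{align*}
A^{0,q}(X,\Omega^p_X(\log D)\otimes L^{-1})\;\hookrightarrow\; A^{0,q}\big(Y,\Omega^p_Y(\log D')\otimes \o\pi^*L^{-1}\big)\;\cong\; A^{n,q}\big(Y,T^{n-p}_Y(-\log D')\big),
\end{align*}
and that this pullback intertwines $\p$ and $\b\p$ with their downstairs counterparts, exactly the role played by equation (\ref{1.1}).

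The second step is to resolve the singularities. Following the Viehweg--Kawamata approach sketched in Section \ref{sec5}, I would produce a projective (or, in the K\"ahler setting, a proper bimeromorphic) modification $\rho:T\to Y$ with $T$ smooth and the total transform of $D'$ a simple normal crossing divisor $D_T$, and replace $Y$ by $T$ throughout. The Hodge-theoretic input of \cite[Theorem 0.2]{Wan} on the compact K\"ahler manifold $T$ then lets me solve the auxiliary $\b\p$-equation for bundle-valued $(n,q)$-forms and, via the harmonic projection argument, conclude that the relevant harmonic component vanishes. Finally I would push the solution back down: the projection formula and the $G$-action of the cyclic group $\mathbb{Z}/N$ decompose every space into eigen-components $L^{-i}(\lfloor\tfrac{i}{N}\sum a_jD_j\rfloor)$, and the piece $\alpha$ lives in the $L^{-1}$-eigenspace; averaging the pulled-back solution over the Galois group and taking the correct eigen-component should produce the desired $x\in A^{0,q-1}(X,\Omega^{p+1}_X(\log D)\otimes L^{-1})$, in analogy with Lemma \ref{lemma1} and equations (\ref{harmonic})--(\ref{2.2}).

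The main obstacle, and the reason this is only conjectural, is precisely the step where singularities intervene. When all $a_i=1$ (Theorem \ref{thm1}) the cover is already smooth and every sheaf identity is a clean local computation; for general $0\le a_i\le N$ the cover acquires quotient singularities and one must either work with a resolution or with $V$-manifold Hodge theory. The danger is that resolving $T\to Y\to X$ distorts the sheaves $\Omega^\bullet_Y(\log D')$: the naive identity $\rho_*\Omega^p_T(\log D_T)=\Omega^p_Y(\log D')$ can fail, and controlling the higher direct images $R^{>0}\rho_*$ is exactly what makes the downstairs $\b\p$-equation subtle. The honest expectation, reflected in the fact that this is stated as Conjecture \ref{con}, is that one needs a careful Hodge-theoretic comparison on $T$ together with a vanishing statement for the error terms introduced by the modification; matching the decomposition $\o\pi_*\mc{O}_Y=\bigoplus_i L^{-i}(\lfloor\tfrac{i}{N}\sum a_jD_j\rfloor)$ with the eigen-decomposition of the solution on $T$ is where I expect the argument to either close or genuinely break down, and I would concentrate the technical effort there.
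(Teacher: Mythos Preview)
The statement you were asked to address is labeled a \emph{Conjecture} in the paper, not a theorem, and the paper gives no proof of it. Section~\ref{sec5} only explains the motivation: in the projective case the analogous $E_1$-degeneration is obtained by passing to the normalized cyclic cover, then invoking Kawamata's result \cite[Lemma 3.19]{Viehweg} (which requires an ample invertible sheaf) to find a smooth finite cover $\delta:T\to Y$, and finally reducing to the ordinary Hodge spectral sequence on $T$. The paper explicitly lists only the known special cases ($L$ trivial, $L=\mc{O}_X(D)$, and the smooth-divisor case of Theorem~\ref{thm1}) and leaves the general statement open.

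Your proposal is therefore not something to be compared against a proof in the paper, but rather a sketch of a possible attack, and you are candid about this. The strategy you outline---form the normalized cyclic cover, resolve, solve upstairs via \cite[Theorem 0.2]{Wan}, and push down using the $\mb{Z}/N$-eigendecomposition---is exactly the Esnault--Viehweg/Kawamata template the paper alludes to. You have also correctly located the genuine obstruction: in the K\"ahler (non-projective) setting one has no ample line bundle, so Kawamata's construction of a smooth finite cover is unavailable, and a bimeromorphic resolution $\rho:T\to Y$ is not finite, so the comparison of $\rho_*\Omega^p_T(\log D_T)$ with $\Omega^p_Y(\log D')$ and the control of $R^{>0}\rho_*$ become the crux. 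That is precisely why the paper leaves this as Conjecture~\ref{con}; your diagnosis of where the argument would need new input is accurate.
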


Note that (\ref{dbar4}) can be solved for the case that $L$ is trivial or $L=\mc{O}_X(D)$ \cite[Theorem 0.1, 0.2]{Wan}, and Theorem \ref{thm1} is a special case that $L^N=\mc{O}_X(D)$ for a smooth divisor $D$.


\begin{thebibliography}{99}

%\bibitem{Aikou} T. Aikou, {\it Finsler Geometry on complex vector bundles}, Riemann-Finsler Geometry, MSRI Pulblications {\bf 50} (2004), 83-105.

\bibitem{Ambro} F. Ambro, {\it An injectivity theorem}, Compositio Math.  {\bf 150} (2014), 999-1023.

\bibitem{Ancona} V. Ancona, B. Gaveau,
Differential Forms on Singular Varieties,  De Rham and Hodge Theory Simplified,
Chapman  Hall/CRC, 2005.


\bibitem{BHPV} W. Barth, K. Hulek, C. Peters, A. Van de Ven,
Compact complex surfaces,
 Second edition. Ergebnisse der Mathematik und ihrer
Grenzgebiete. 3. Folge. A Series of Modern Surveys in Mathematics
[Results in Mathematics and Related Areas. 3rd Series. A Series of
Modern Surveys in Mathematics], 4. Springer-Verlag, Berlin, 2004.

%\bibitem{Broo} N. Broomhead, J.-C. Ottem, A. Prendergast-Smith, \textsl{Partially ample line bundles on toric varieties}, Glasgow Math. J. {\bf 58} (2016) 587-598.

\bibitem{cfgu} L. A. Cordero, M. Fernandez, A. Gray, L. Ugarte,
 {\it A general description of the terms in the Fr\"olicher spectral sequence,}
  Diff. Geom. Applic. \textbf{7} (1997), 75-84.
  
  \bibitem{de} G. de Rham, K. Kodaira,
 \textit{Harmonic integrals,}
   (Mimeographed notes), Institute for Advanced Study, Princeton (1950).

\bibitem{De} P. Deligne,
{\it Th\'{e}or\`{e}me de Lefschetz et crit\`{e}res de d\'{e}g\'{e}n\'{e}rescence de suites spectrales,}
 Publ. Math. Inst. Hautes \'{E}tudes Sci. \textbf{35} (1969), 107-126.


\bibitem{Del70} P. Deligne,
 \textit{Equations differentielles \`{a} points singuliers r\'eguliers,}
  Springer Lect. Notes Math. \textbf{163} (1970).

\bibitem{Del71} P. Deligne, {\it Th\'eorie de Hodge, II}, Inst. Hautes \'Etudes Sci. Publ. Math. \textbf{40} (1971), 5-57.


  
  
\bibitem{Del87} P. Deligne, L. Illusie, \textit{Rel\`evements modulo $p^2$ et d\'ecomposition du complexe de de Rham}, Inventiones math. {\bf 89} (1987), 247-270.
  
  \bibitem{EV06}H. Esnault, E. Viehweg,
 \textit{Logarithmic de Rham complexes and vanishing theorems},
  Invent. Math.  \textbf{86} (1986), 161-194.

\bibitem{Viehweg} H. Esnault, E. Viehweg,
Lectures on vanishing theorems,  DMV Seminar, {\bf 20}, Birkh\"{a}user, Verlag, Basel, 1992.

\bibitem{Fujino} O. Fujino,
 \textit{Introduction to the log minimal model program for log canonical pairs,}
   preprint, arXiv:0907.1506 [math.AG]. Published as
 \textit{Foundations of the minimal model program,}
  MSJ Memoirs, 35. Mathematical Society of Japan, Tokyo, 2017.

\bibitem{Fulton} W. Fulton, Algebraic Topology, A First Course, Springer-Verlag, New York, 1995. 

%\bibitem{Greb} D. Greb, A. K\"uronya, Partial positivity: geometry and cohomology of $q$-ample line bundles, Recent advances in algebraic geometry, 207--239, London Math. Soc. Lecture Note Ser., 417, Cambridge Univ. Press, Cambridge, 2015.

\bibitem{Griffith} P. Griffith, J. Harris,
Principles of Algebraic Geometry,
 Wiley, New York, 1978.
 
 %\bibitem{Wan2} C. Huang, K. Liu, X. Wan, X. Yang, \textit{Logarithmic vanishing theorems on compact K\"ahler manifolds I}, \href{https://arxiv.org/abs/1611.07671}{arXiv: 1611. 07671v1}, 2016.
 
 \bibitem{Iacono} D. Iacono, \textit{Deformations and obstructions of pairs $(X,D)$}, International Mathematics Research Notices \textbf{19} (2015), 9660-9695.
 
 
\bibitem{Kollar} J. Koll\'ar, Singularities of pairs. Algebraic Geom.-Santa Cruz. In: Proceedings of Symposia Pure Mathematics 62, Part 1, pp. 221-287. Amer. Math. Soc., Providence, RI (1995).

\bibitem{Kodaira} K. Kodaira, \textit{The theorem of Riemann-Roch on compact analytic surfaces}, Amer. J. Math. {\bf 73} (1951), 1-46.

\bibitem{KKP08} L. Katzarkov, M. Kontsevich, T. Pantev, \textit{Hodge theoretic aspects of mirror
symmetry}, In From Hodge theory to integrability and TQFT tt*-geometry,
volume 78 of Proc. Sympos. Pure Math., pages 87-174. Amer. Math. Soc., Providence,
RI, 2008.
 
 \bibitem{kawa} Y. Kawamata, {\it On deformations of compactifible complex manifolds}, Math. Ann. {\bf 235} (1978), 247-265.
 
 %\bibitem{kawa1} Y. Kawamata, {\it Characterization of abelian varieties}, Compositio math. {\bf 43} (1981), 253-276.
 
 
 \bibitem{Wan} K. Liu, S. Rao, X. Wan, {\it Geometry of logarithmic forms and deformations of complex structures}, J. Algebraic Geom. 28 (2019), no. 4, 773-815. 
 
 \bibitem{Liu} K. Liu, S. Rao, X. Yang,
\textit{Quasi-isometry and deformations of Calabi-Yau manifolds},
 Invent. Math. {\bf 199} (2015), no. 2, 423-453.
 
 \bibitem{LSY} K. Liu, X. Sun, S.-T. Yau,
\textit{Recent development on the geometry of
the Teichm\"{u}ller and moduli spaces of Riemann surfaces},
 Surveys in differential geometry. Vol. XIV.
Geometry of Riemann surfaces and their moduli spaces, 221-259,
(2009).
 
% \bibitem{Wan1} K. Liu, X. Wan, X. Yang, {\it Logarithmic vanishing theorems for effective $q$-ample divisors}, submitted. 
 
 \bibitem{Liu1} K. Liu, S. Zhu, {\it Global methods of solving equations on manifolds}, Surveys in Differential Geometry, 2020, 241-276.
 
 \bibitem{Laz} R. Lazarsfeld, Positivity in Algebraic Geometry, I, II, 
 A Series of Modern Surveys in Mathematics 48. Berlin: Springer, 2004.
 
 \bibitem{Nog} J. Noguchi, {\it A short analytic proof of closedness of logarithmic forms}, Kodai Math. J. {\bf 18} (1995), No. 2, 295-299.
 
 \bibitem {RwZ} S. Rao, X. Wan, Q. Zhao,
 \textit{Power series proofs for local stabilities of K\"ahler and balanced structures with mild $\partial\bar\partial$-lemma},
Nagoya Mathematical Journal, 1-50. doi:10.1017/nmj.2021.4.
 
 \bibitem{RwZ1} S. Rao, X. Wan, Q. Zhao, {\it On local stabilities of p-K\"ahler structures}, Compos. Math. 155 (2019), no. 3, 455-483.

\bibitem{RZ15}
S. Rao, Q. Zhao,
\emph{Several special complex structures and their deformation properties},
J Geom Anal 28 (2018), 2984-3047.

%\bibitem{Shiffman} B. Shiffman, A. Sommese,  Vanishing theorems on complex manifolds. Progress in Mathematics, vol. 56. Birkhauser Boston Inc, Boston (1985).


\bibitem{T87} G. Tian,
\newblock \textit{Smoothness of the universal deformation space
of compact Calabi-Yau manifolds and its Petersson-Weil metric},
\newblock Mathematical aspects of string theory (San Diego, Calif., 1986),
629-646, Adv. Ser. Math. Phys., 1, World Sci. Publishing, Singapore,
(1987).

\bibitem {To89} A. Todorov,
\newblock \textit{The Weil-Petersson geometry of the moduli
space of $\mathbb{SU}${$(n\geq3)$} (Calabi-Yau) manifolds I},
\newblock Comm. Math. Phys., \textbf{126 (2)}, (1989), 325-346.

\bibitem{Wells} R. Wells, {\it Comparison of De Rham and Dolbeault cohomology for proper surjective mappings}, Pacific. J. Math. {\bf 53} (1974), no. 1, 281-300.

\bibitem {RZ}  Q. Zhao, S. Rao,
 \textit{Applications of deformation formula of holomorphic
one-forms},
 Pacific J. Math. Vol. {\bf 266}, No. 1, 2013,
221-255.

\bibitem {RZ2}  Q. Zhao, S. Rao,
 \textit{Extension formulas and deformation invariance of Hodge numbers},
C. R. Math. Acad. Sci. Paris {\bf 353}
(2015), no. 11, 979-984.

\end{thebibliography}
\end{document}